\newtheorem{theorem}{Theorem}[section]
\newtheorem{corollary}[theorem]{Corollary}
\newtheorem{lemma}[theorem]{Lemma}
\newtheorem{assumption}{Assumption}
\newcommand{\by}{{\bf y}}
\DeclareMathOperator*{\argmin}{\mathrm{arg\,min}}
\DeclareMathOperator*{\minimize}{\mathrm{minimize}}
\icmltitlerunning{ZO-BCD}
\begin{document}
\twocolumn[
\icmltitle{A Zeroth-Order Block Coordinate Descent Algorithm\\ for Huge-Scale Black-Box Optimization}


\begin{icmlauthorlist}
\icmlauthor{HanQin Cai}{ucla}
\icmlauthor{Yuchen Lou}{hku} 
\icmlauthor{Daniel Mckenzie}{ucla}
\icmlauthor{Wotao Yin}{alibaba}
\end{icmlauthorlist}

\icmlaffiliation{ucla}{Department of Mathematics, University of California, Los Angeles, Los Angeles, CA, USA}
\icmlaffiliation{hku}{University of Hong Kong, Hong Kong SAR, China}

\icmlaffiliation{alibaba}{DAMO Academy, AliBaba US, Bellevue, WA, USA}

\icmlcorrespondingauthor{HanQin Cai}{hqcai@math.ucla.edu}

\icmlkeywords{Zeroth-order optimization, Derivative-free optimization, Black-box optimization, Sparse gradient, Block coordinate descent,  Sparse wavelet attack, Adversarial attack}  

\vskip 0.3in
]



\printAffiliationsAndNotice{}  

\begin{abstract}
We consider the zeroth-order optimization problem in the huge-scale setting, where the dimension of the problem is so large that performing even basic vector operations on the decision variables is infeasible. In this paper, we propose a novel algorithm, coined ZO-BCD, that exhibits favorable overall query complexity {\em and} has a much smaller per-iteration computational complexity. In addition, we discuss how the memory footprint of ZO-BCD can be reduced even further by the clever use of circulant measurement matrices. As an application of our new method, we propose the idea of crafting adversarial attacks on neural network based classifiers in a {\em wavelet domain}, which can result in problem dimensions of over one million. In particular, we show that crafting adversarial examples to audio classifiers in a wavelet domain can achieve the state-of-the-art attack success rate of $97.9\%$ with significantly less distortion. 
\end{abstract}

\section{Introduction}
\label{sec:Intro}
We are interested in problem \eqref{eq:Minimize_me} under the restrictive assumption that one only has noisy zeroth-order access to $f$ ({\em i.e.} one cannot access the gradient, $\nabla f)$) {\em and} the dimension of the problem, $d$, is huge, say $d>10^7$. 
\begin{equation} \label{eq:Minimize_me}
    \minimize_{x\in \mathcal{X}\subset\mathbb{R}^{d}} f(x) .
\end{equation}
Such problems (with small or large $d$) arise frequently in domains as diverse as simulation-based optimization in chemistry and physics \citep{reeja2012microwave}, hyperparameter tuning for combinatorial optimization solvers \citep{hutter2014efficient} and for neural networks \citep{bergstra2012random} and online marketing \citep{flaxman2005online}. Lately, algorithms for zeroth-order optimization have drawn increasing attention due to their use in finding good policies in reinforcement learning \citep{salimans2017evolution,mania2018simple,choromanski2020provably} and in crafting adversarial examples given only black-box access to neural-network based classifiers \citep{chen2017zoo,zo-scd,alzantot2018did,cai2020zeroth}. We note that in all of these applications queries ({\em i.e.} evaluating $f$ at a chosen point) are considered expensive, thus it is desirable for zeroth-order optimization algorithms to be as {\em query efficient} as possible. 

Unfortunately, it is known \citep{jamieson2012query} that the worst case query complexity of {\em any} noisy zeroth order algorithm for strongly convex $f$ scales linearly with $d$. Clearly, this is prohibitive for huge $d$. Recent works have begun to side-step this issue by assuming $f$ has additional, low-dimensional, structure. For example, \citep{wang2018stochastic,balasubramanian2018zeroth,cai2020scobo,cai2020zeroth} assume the gradients $\nabla f$ are (approximately) $s$-sparse (see Assumption~\ref{assumption:Sparsity}) while \citep{golovin2019gradientless} and others assume $f(x) = g(Az)$ where $A:\mathbb{R}^{s}\to \mathbb{R}^{d}$ and $s \ll d$. All of these works promise a query complexity that scales linearly with the intrinsic dimension, $s$, and only logarithmically with the extrinsic dimension, $d$. However there is no free lunch here; the improved complexity of \citep{golovin2019gradientless} requires access to {\em noiseless} function evaluations, the results of \citep{balasubramanian2018zeroth} only hold if the support of $\nabla f(x)$ is {\em fixed}\footnote{See Appendix A of \citep{cai2020zeroth} for a proof of this.} for all $x \in \mathcal{X}$ and while \citep{wang2018stochastic,cai2020scobo,cai2020zeroth} allow for noisy function evaluations and changing gradient support, both solve a computationally intensive optimization problem as a sub-routine, requiring at least $\Omega(sd\log(d))$ memory and FLOPS per iteration. 



\subsection{Contributions}
In this paper we provide the first zeroth-order optimization algorithm enjoying a sub-linear (in $d$) query complexity {\em and} a sub-linear per-iteration computational complexity. In addition, our algorithm has an exceptionally small memory footprint. Furthermore, it does not require the repeated sampling of $d$-dimensional random vectors, a hallmark of many zeroth-order optimization algorithms. With this new algorithm, ZO-BCD, in hand we are able to solve black-box optimization problems of a size hitherto unimagined. Specifically, we consider the problem of generating adversarial examples to fool neural-network-based classifiers, given only black-box access to the model (as introduced in \citep{chen2017zoo}). However, we consider generating these malicious examples by perturbing natural examples {\em in a wavelet domain}. For image classifiers (we consider \texttt{Inception-v3} trained on \texttt{ImageNet}) we are able to produce attacked images with a record low $\ell_2$ distortion of $13.7$ and a success rate of $96\%$, exceeding the state of the art. 
For audio classifiers, switching to a wavelet domain results in a problem dimension of over $1.7$ million. Using ZO-BCD, this is not an issue and we achieve a targeted attack success rate of $97.93\%$ with a mean distortion of $-6.32$ dB.
 
\begin{figure}
\label{fig:SampleAttack}
  \centering\small
  \includegraphics[width=2.2cm, valign=c]{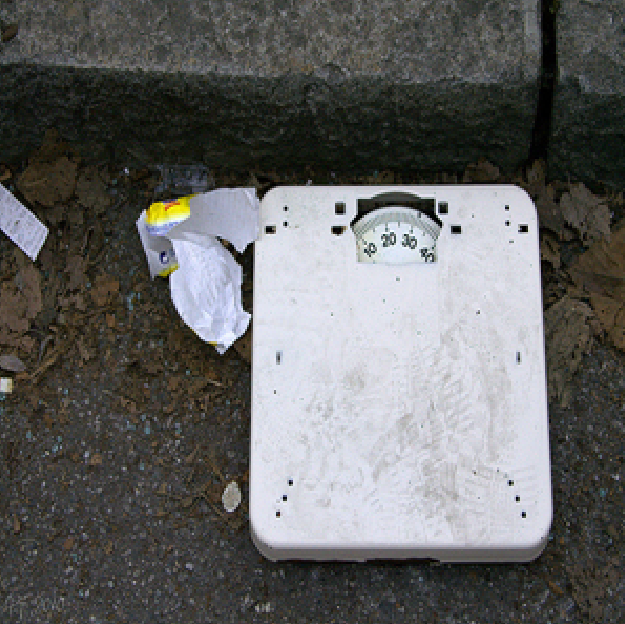}
  $+$
  $0.02\times$\includegraphics[width=2.2cm, valign=c]{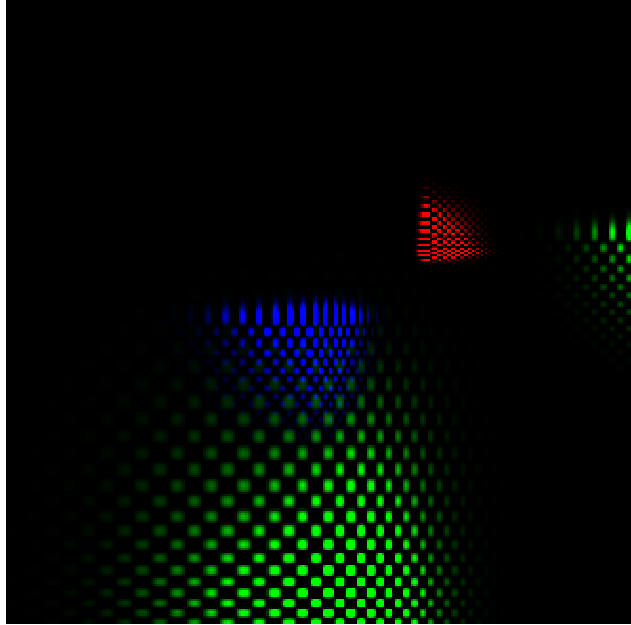}
  $=$
  \includegraphics[width=2.2cm, valign=c]{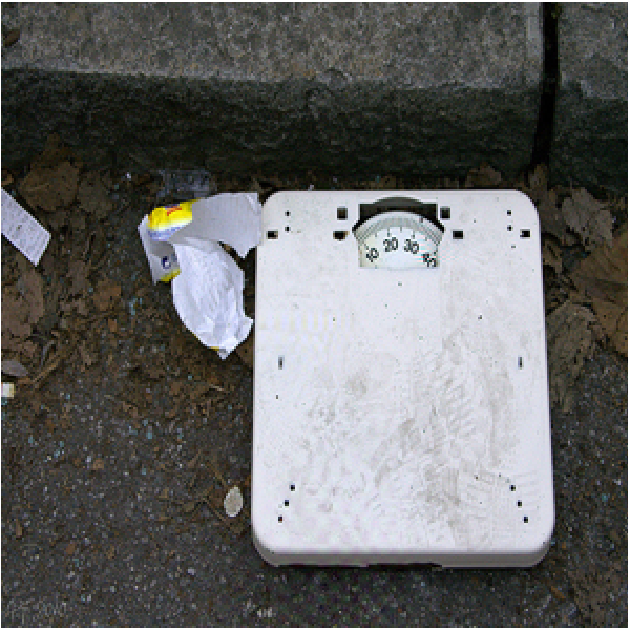}
  \caption{{\bf Left:} Original image from {\tt ImageNet} with true label ``scale''. {\bf Center:} Wavelet perturbation crafted using ZO-BCD. {\bf Right:} The attacked image, constructed by adding the perturbation, scaled down by 0.02, to the original image. Mis-classified as a ``switch''.}%
\end{figure}

\subsection{Relation to prior work}
As mentioned above, the recent works \citep{wang2018stochastic,balasubramanian2018zeroth,cai2020zeroth} provide zeroth-order algorithms whose query complexity scales linearly with $s$ and logarithmically with $d$. In order to ameliorate the prohibitive computational and memory cost associated with huge $d$, several domain-specific heuristics have been employed in the literature. For example in \citep{chen2017zoo,alzantot2019genattack}, in relation to adversarial attacks, an upsampling operator $D: \mathbb{R}^{p}\to \mathbb{R}^{d}$ with $p \ll d$ is employed. Problem \eqref{eq:Minimize_me} is then replaced with the lower dimensional problem: $\minimize_{z\in\mathbb{R}^{p}}f(D(z))$. Several other works \citep{alzantot2018did, taori2019targeted,cai2020zeroth} choose a low dimensional random subspace $T_{k}\subset\mathbb{R}^{d}$ at each iteration and then restrict $x_{k+1} - x_{k}\in T_{k}$. We emphasize that none of the aforementioned works {\em prove} such a procedure will converge, and our work is partly motivated by the desire to provide this empirically successful trick with firm guarantees of success.

In the reinforcement learning literature it is common to evaluate the $f(x_{k} + z_{k,i})$ on parallel devices and send the computed function value and the perturbation $z_{k,i}$ to a central worker, which then computes $x_{k+1}$. As $x\in\mathbb{R}^{d}$ parametrizes a neural network, $d$ can be extremely large, and hence the communication of the $z_{k,i}$ between workers becomes a bottle neck. \citep{salimans2017evolution} overcomes this with a ``seed sharing'' trick, but again this heuristic lacks rigorous analysis. We hope ZO-BCD's (particularly the ZO-BCD-RC variant, see Section~\ref{sec:zo-bcd}) intrinsically small memory footprint will make it a competitive, principled alternative. 

Finally, although two recent works have examined the idea of wavelet domain adversarial attacks \citep{wavetransform,din2020steganographic} they are of a very different nature to our approach. We discuss this further in Section~\ref{sec:sparse_attack}. 

\subsection{Assumptions and notation}
\label{sec:assumptions}
As mentioned, we will suppose the decision variables $x$ have been subdivided into $J$ blocks of sizes $d^{(1)},\ldots, d^{(J)}$. Following the notation of \citep{tappenden2016inexact}, we suppose there exists a permutation matrix $U\in\mathbb{R}^{d\times d}$ and a division of $U$ into submatrices $U = [U^{(1)},U^{(2)},\ldots, U^{(J)}]$ such that $U^{(j)}\in\mathbb{R}^{d\times d^{(j)}}$ and the $j$-th block is spanned by the columns of $U^{(j)}$. Letting $x^{(j)}$ denote the decision variables in the $j$-th block, we write $x = \sum_{j=1}^{J} U^{(j)}x^{(j)}$ or simply $x = (x^{(1)},x^{(2)},\ldots, x^{(J)})$. We shall consistently use the notation $g(x) :=\nabla f(x)$, omitting $x$ if it is clear from context. By $g^{(j)}$ we mean the components of the gradient corresponding to the $j$-th block, {\em i.e.} $g^{(j)} = \nabla_{x^{(j)}}f$, regarded as either a vector in $\mathbb{R}^{d}$ or in $\mathbb{R}^{d^{(j)}}$. Finally, we use $\tilde{\mathcal{O}}(\cdot)$ notation to suppress logarithmic factors. Let us now introduce some standard assumptions on the objective function.

\begin{assumption}[Block Lipschitz differentiability] \label{assumption:Lipschitz_Diff}
$f$ is continuously differentiable and for some fixed constant $L_{j}$
\begin{equation*}
\|g^{(j)}(x) - g^{(j)}(x + U^{(j)}t)\|_2 \leq L_{j}\|t\|_2
\end{equation*}
for all $j=1,\ldots, J$, $x\in \mathcal{X}$ and $t\in \mathbb{R}^{d^{(j)}}$. 
\end{assumption}
If $f$ is $L$-Lipschitz differentiable then it is also block Lipschitz differentiable, with $\max_j L_j \leq L$.

\begin{assumption}[Convexity]
\label{assumption:Convexity}
$\mathcal{X}$ is a convex set, and $f(tx+(1-t)y)\leq tf(x)+(1-t)f(y)$ for all $x,y\in\mathcal{X},\,\,t\in[0,1]$.
\end{assumption}
Define the solution set $ \mathcal{X}^{*} = \argmin_{x\in\mathcal{X}}f(x)$. If this set is non-empty we define the level set radius for $x\in\mathcal{X}$ as:
\begin{align}
    \mathcal{R}(x):=\max_{y\in\mathcal{X}}\max_{x^*\in \mathcal{X}^*}\{\|y-x^*\|_2:f(y)\leq f(x)\}.
    \label{eq:level_set_radius}
\end{align}
\begin{assumption}[Non-empty solution set and Bounded level sets]
\label{assumption:BoundedLevelSets}
$\mathcal{X}^{*}$ is non-empty and $\mathcal{R}(x_0) < \infty$.
\end{assumption}
\begin{assumption}[Adversarially noisy oracle] \label{assumption:noise model} $f$ is only accessible through a {\em noisy oracle}: $E_{f}(x) = f(x) + \xi$, where $\xi$ is a random variable satisfying $|\xi| \leq \sigma$.
\end{assumption}

\begin{assumption}[Sparse gradients]
\label{assumption:Sparsity}
There exists a fixed integer $0 < s_{\mathrm{exact}} < d$ such that for all $x\in \mathbb{R}^{d}$:
\begin{equation*}
    \|g(x)\|_{0} := \left|\{i: g_{i}(x) \neq 0\}\right| \leq s_{\mathrm{exact}} .
\end{equation*}
\end{assumption}
It is of interest to relax this assumption to an ``approximately sparse'' assumption, such as in \citep{cai2020zeroth}. However,  it is unclear randomly chosen blocks (see Section~\ref{sec:equisparse}) will inherit this property. We leave the analysis of this case for future work. Finally, let $\nabla^{2}_{jj}f\in\mathbb{R}^{d^{(j)}\times d^{(j)}}$ denote the $j$-th block Hessian. 

\begin{assumption}[Weakly sparse block Hessian] \label{assumption:WeakSparsity}
$f$ is twice differentiable and, for all $j = 1,\ldots, J$, $x\in \mathcal{X}$ we have $\|\nabla^{2}_{jj}f(x)\|_{1} \leq H$ for some fixed constant $H$. 
\end{assumption}
Note that $\|\cdot\|_{1}$ represents the {\em element-wise} $\ell_1$-norm: $\|B\|_1 = \sum_{i,j}|B_{ij}|$. 

\section{Gradient estimators}
\label{sec:grad estimator}
Randomized (block) coordinate descent methods are an attractive alternative to full gradient methods for huge-scale problems~\citep{nesterov2012efficiency}. 
ZO-BCD is a block coordinate method adapted to the zeroth-order setting and conceptually has three steps: 
\begin{enumerate}
\vspace{-0.05in}
    \item Choose a block, $j\in \{1,\ldots, J\}$ at random.
    \item Use zeroth-order queries to find an approximation $\hat{g}^{(j)}_{k}$ of the true block gradient $g^{(j)}_{k}$.
    \item Take a negative gradient step: $x_{k+1} = x_{k} - \alpha\hat{g}^{(j)}_{k}$.
\end{enumerate}
We abuse notation slightly; the block gradient $\hat{g}_k^{(j)}$ is regarded as both a vector in $\mathbb{R}^{d^{(j)}}$ and a vector in $\mathbb{R}^{d}$ with non-zeros in the $j$-th block only. 



In principle any scheme for constructing an estimator of $g_k$ could be adapted for estimating $g^{(j)}_{k}$, as long as one is able to bound $\|g^{(j)}_k - \hat{g}^{(j)}_{k}\|_2$ with high probability. As we wish to exploit gradient sparsity, we choose to adapt the estimator presented in \citep{cai2020zeroth}. Let us now discuss how to do so. Fix a sampling radius $\delta > 0$. Suppose the $j$-th block has been selected and choose $m$ {\em sample directions} $z_{1},\ldots, z_{m}\in\mathbb{R}^{d^{(j)}}$ from a Rademacher distribution\footnote{That is, the entries of $z_{i}$ are $+1$ or $-1$ with equal probability.}. Consider the finite difference approximations to the directional derivatives:
\begin{equation} \label{eq:Measurements}
y_{i} = \frac{1}{\sqrt{m}}\frac{E_{f}(x+\delta U^{(j)}z_{i}) - E_{f}(x)}{\delta} \approx \frac{1}{\sqrt{m}}z_{i}^{\top}g^{(j)}
\end{equation}
Stack the $y_i$ into a vector $y \in \mathbb{R}^{m}$, let $Z\in\mathbb{R}^{m\times (d/J)}$ be the matrix with rows $z_i^{\top}/\sqrt{m}$ and observe $y \approx Zg^{(j)}$; an underdetermined linear system. If\footnote{Throughout, we assume $s \geq s_{\mathrm{exact}}$ is specified by the user.} $\|g\|_0 \leq s$ (see Assumption~\ref{assumption:Sparsity}) then also $\|g^{(j)}\|_0 \leq s$. Thus, we approximate $g^{(j)}$ by solving the sparse recovery problem:
\begin{equation} \label{eq:SparseRecoveryForGradient}
    \hat{g}^{(j)} = \argmin \|Zv - y\|_2 \quad v\in\mathbb{R}^{d^{(j)}} \textnormal{ and } \|v\|_{0} \leq s .
\end{equation}
We propose solving Problem~\eqref{eq:SparseRecoveryForGradient} using $n$ iterations of CoSaMP \citep{needell2009cosamp}, but other choices are possible. This approach, presented as Algorithm~\ref{algo:grad estimate}, yields an accurate gradient estimator \citep{cai2020zeroth} using only $\tilde{\mathcal{O}}(s)$ queries, assuming $g^{(j)}$ is sparse. In contrast, direct finite differencing \citep{berahas2019theoretical} requires $\mathcal{O}(d^{(j)})$ queries. 
\begin{algorithm}[tb]
\caption{Block Gradient Estimation}
\label{algo:grad estimate}
\begin{algorithmic}[1]
   \STATE {\bfseries Input:} $x$: current point; $j$: choice of block; $s$: gradient sparsity level; $\delta$: query radius; $n$: number of CoSaMP iterations; $\{z_i\}_{i=1}^m$: sample directions in $\mathbb{R}^{d^{(j)}}$.
   \FOR{$i=1$ {\bfseries to} $m$}
   \STATE $y_i\leftarrow (E_f(x+\delta U^{(j)}z_i)-E_f(x))/(\sqrt{m}\delta)$
   \ENDFOR
   \STATE $\by\leftarrow[y_1,\cdots, y_m]^T$; $Z\leftarrow 1/\sqrt{m}[z_1,\cdots,z_m]^T$
   \STATE $\hat{g}^{(j)}\approx\arg\min_{\|v\|_0\leq s}\|Zv-\by\|_2$  by $n$ iterations of CoSaMP
   \STATE {\bfseries Output:} $\hat{g}^{(j)}$: estimated block gradient.
\end{algorithmic}
\end{algorithm}

\begin{theorem}  \label{thm:Grad_Estimate_Error}
    Suppose $f$ satisfies Assumptions~\ref{assumption:Lipschitz_Diff}, \ref{assumption:Sparsity} and \ref{assumption:WeakSparsity}. Let $g^{(j)}$ be the output of Algorithm~\ref{algo:grad estimate} with $\delta = 2\sqrt{\sigma/H}$, $s\geq s_{\mathrm{exact}}$ and $m = b_1s\log(d/J)$ Rademacher sample directions. Then with probability at least $1 - (s/d)^{b_2 s}$:
    \begin{equation}        \label{eq:ErrorBound2}
       \|\hat{g}^{(j)} - g^{(j)}\|_{2} \leq \rho^{n}\|g^{(j)}\|_2 +2\tau\sqrt{\sigma H}.
    \end{equation}
 \end{theorem}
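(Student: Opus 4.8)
The plan is to separate the argument into a deterministic bound on the measurement-error vector $\be := \by - Z g^{(j)}$ and a compressed-sensing recovery guarantee for CoSaMP that converts a bound on $\|\be\|_2$ into the stated bound on $\|\hat{g}^{(j)} - g^{(j)}\|_2$.

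First I would expand each measurement. Writing the oracle as $E_f = f + \xi$ and applying Taylor's theorem to second order to $t \mapsto f(x + t U^{(j)} z_i)$ gives
\[
\frac{f(x + \delta U^{(j)} z_i) - f(x)}{\delta} = z_i^\top g^{(j)} + \frac{\delta}{2} z_i^\top \nabla^2_{jj} f(\zeta_i)\, z_i
\]
for some $\zeta_i$ on the segment, so the $i$-th entry of $\be$ equals $\tfrac{1}{\sqrt m}\bigl(\tfrac{\delta}{2} z_i^\top \nabla^2_{jj} f(\zeta_i) z_i + (\xi_i^+ - \xi_i^-)/\delta\bigr)$. The key observation --- and exactly where Assumption~\ref{assumption:WeakSparsity} enters --- is that since $z_i$ is Rademacher, $\|z_i\|_\infty = 1$, whence $|z_i^\top \nabla^2_{jj} f\, z_i| \le \sum_{k,l}|(\nabla^2_{jj}f)_{kl}| = \|\nabla^2_{jj}f\|_1 \le H$; the noisy-oracle assumption gives $|\xi_i^\pm| \le \sigma$. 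Squaring, summing over the $m$ coordinates and taking the square root cancels the $1/\sqrt m$ and yields the deterministic bound $\|\be\|_2 \le \tfrac{\delta H}{2} + \tfrac{2\sigma}{\delta}$. Minimizing the right-hand side over $\delta$ produces precisely $\delta = 2\sqrt{\sigma/H}$ and the optimal value $2\sqrt{\sigma H}$, which explains the prescribed sampling radius.

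Second, I would invoke the standard recovery guarantee for CoSaMP. Because $s \ge s_{\mathrm{exact}}$, Assumption~\ref{assumption:Sparsity} makes $g^{(j)}$ exactly $s$-sparse, so the model-mismatch (tail) terms in the usual CoSaMP bound vanish and its per-iteration contraction gives, after $n$ iterations, $\|\hat g^{(j)} - g^{(j)}\|_2 \le \rho^n \|g^{(j)}\|_2 + \tau\|\be\|_2$, provided the sampling matrix $Z$ satisfies the restricted isometry property (RIP) of the order CoSaMP requires (e.g.\ order $4s$) with a sufficiently small constant; here $\rho < 1$ and $\tau$ depend only on that RIP constant. Combining this with $\|\be\|_2 \le 2\sqrt{\sigma H}$ from the first step yields the claimed estimate \eqref{eq:ErrorBound2}.

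The remaining, and principal, obstacle is the probabilistic claim: that the Rademacher matrix $Z \in \mathbb{R}^{m \times (d/J)}$ satisfies the required RIP with probability at least $1 - (s/d)^{b_2 s}$ once $m = b_1 s \log(d/J)$. This is where I would appeal to the sub-Gaussian RIP theorem, whose failure probability is of order $\exp(-c m)$ as soon as $m \gtrsim s \log(d^{(j)}/s)$ with $d^{(j)} = d/J$ the block's ambient dimension. The real work is constant bookkeeping: choosing $b_1$ large enough that $m$ clears this threshold with the desired RIP constant, and then verifying that $\exp(-c m)$ is dominated by $(s/d)^{b_2 s}$ for an appropriate $b_2$, being careful that the relevant ambient dimension is $d/J$ rather than $d$. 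I expect this constant-chasing --- rather than any single conceptual step --- to be the fiddly part, and most of it can be imported directly from the analysis of the non-block estimator in \citep{cai2020zeroth}.
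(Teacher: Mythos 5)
Your proposal is correct and is, in substance, the same argument as the paper's: the paper simply restricts $f$ to the chosen block (noting $\|g^{(j)}\|_0 \leq \|g\|_0 \leq s$ and $\|\nabla^2_{jj}f\|_1 \leq H$) and then cites Corollary~2.7 of \citep{cai2020zeroth}, whose proof is exactly the Taylor-expansion bound $\|\be\|_2 \leq \tfrac{\delta H}{2} + \tfrac{2\sigma}{\delta}$ optimized at $\delta = 2\sqrt{\sigma/H}$, the CoSaMP contraction under RIP, and the sub-Gaussian RIP bound for Rademacher matrices that you spell out. The only caveat, which you correctly flag, is the constant bookkeeping matching $\exp(-cm)$ with $m = b_1 s\log(d/J)$ to the stated failure probability $(s/d)^{b_2 s}$; the paper leaves this entirely inside the citation.
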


The constants $b_1$ and $b_2$ are directly proportional; more sample directions results in a higher probability of recovery. In our experiments we consider $1\leq b_1 \leq 4$. The constant $\rho$ and $\tau$ arise from the analysis of CoSaMP. Both are inversely proportional to $b_1$. For our range of $b_1$, $\rho \approx 0.5$ and $\tau \approx 10$.
 
\subsection{Almost equisparse blocks using randomization}
\label{sec:equisparse}
Suppose $f$ satisfies Assumption~\ref{assumption:Sparsity}, so $\|g(x)\|_{0} \leq s_{\mathrm{exact}}$ for all $x$. In general one cannot improve upon the bound $\|g^{(j)}(x)\|_{0} \leq s_{\mathrm{exact}}$; perhaps all non-zero entries of $g$ lie in the $j$-th block. However, by {\em randomizing} the blocks one can guarantee, with high probability, the non-zero entries of $g$ are {\em almost equally distributed} over the $J$ blocks. We assume, for simplicity, equal-sized blocks ({\em i.e.} $d^{(j)} = d/J$). 

\begin{theorem}
\label{thm:equalsparsity}
Choose $U$ uniformly at random. For any $\Delta > 0$, $x \in \mathbb{R}^d$ we have $\|g^{(j)}(x)\|_{0} \leq (1+\Delta)s_{\mathrm{exact}}/J$ for all $j$ with probability at least $1- 2J\exp(\frac{-\Delta^2s_{\mathrm{exact}}}{3J})$.
\end{theorem}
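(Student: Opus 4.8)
The plan is to recognize this as a balls-into-bins (occupancy) problem and to apply a Chernoff-type concentration inequality for sampling without replacement, followed by a union bound over the $J$ blocks. Write $S = \{i : g_i(x) \neq 0\}$ for the support of the gradient at $x$, so $|S| \leq s_{\mathrm{exact}}$ by Assumption~\ref{assumption:Sparsity}. Since enlarging the support can only increase $\|g^{(j)}(x)\|_0$, it suffices to treat the worst case $|S| = s_{\mathrm{exact}}$. Fix a block index $j$ and set $X_j := \|g^{(j)}(x)\|_0$, the number of support coordinates assigned to block $j$. Because $U$ is a uniformly random permutation and the blocks have equal size $d/J$, block $j$ consists of a uniformly random subset of $d/J$ of the $d$ coordinates; hence $X_j$ counts how many of the $s_{\mathrm{exact}}$ support coordinates fall into this random subset. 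Thus $X_j$ is hypergeometric with parameters $(d, s_{\mathrm{exact}}, d/J)$, and by symmetry its mean is
\[
\mu := \mathbb{E}[X_j] = s_{\mathrm{exact}} \cdot \frac{d/J}{d} = \frac{s_{\mathrm{exact}}}{J}.
\]

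Next I would express $X_j = \sum_{i \in S} \mathbf{1}[\textnormal{coordinate } i \in \textnormal{block } j]$ as a sum of indicator variables. These indicators are not independent — each coordinate lands in exactly one block, so sampling is without replacement — but they are negatively associated, which is precisely the property needed to carry the standard multiplicative Chernoff bound over from the independent (binomial) case. Invoking Hoeffding's observation that the moment generating function of a hypergeometric variable is dominated by that of the corresponding binomial, I obtain for $0 < \Delta \leq 1$ the two-sided bound
\[
\Pr\!\left[\,|X_j - \mu| \geq \Delta \mu \,\right] \leq 2\exp\!\left(-\frac{\Delta^2 \mu}{3}\right) = 2\exp\!\left(-\frac{\Delta^2 s_{\mathrm{exact}}}{3J}\right).
\]
In particular the upper-tail event $\{X_j > (1+\Delta)\,s_{\mathrm{exact}}/J\}$ has probability at most $2\exp(-\Delta^2 s_{\mathrm{exact}}/(3J))$.

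Finally I would take a union bound over the $J$ blocks: the probability that \emph{some} block violates $\|g^{(j)}(x)\|_0 \leq (1+\Delta)s_{\mathrm{exact}}/J$ is at most $2J\exp(-\Delta^2 s_{\mathrm{exact}}/(3J))$, which yields the claimed success probability $1 - 2J\exp(-\Delta^2 s_{\mathrm{exact}}/(3J))$. Retaining the two-sided inequality is what produces the leading factor of $2$; the upper tail alone would give $J$ rather than $2J$.

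I expect the only genuinely delicate step to be justifying the concentration inequality for the dependent indicators. The cleanest route is to cite negative association of the occupancy indicators under a uniform permutation — equivalently, Hoeffding's MGF-domination result for the hypergeometric distribution — rather than to re-derive concentration from scratch. Everything else, namely the identification of the mean and the union bound, is routine. One minor subtlety worth stating explicitly is the reduction to $|S| = s_{\mathrm{exact}}$: the bound is stated in terms of $s_{\mathrm{exact}}$ while the true mean is $|S|/J \leq s_{\mathrm{exact}}/J$, so one should note that the upper tail of $X_j$ is stochastically monotone in $|S|$, making the full support the worst case.
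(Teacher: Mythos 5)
Your proof is correct and follows the same overall strategy as the paper's: define per-block counts of the support coordinates, apply a multiplicative Chernoff-type bound to each count, and finish with a union bound over the $J$ blocks. The one substantive difference is in how the dependence structure is modeled, and here your version is the more careful one. The paper asserts that the vector of block counts $(Y_1,\ldots,Y_J)$ ``obeys the multinomial distribution'' and applies Chernoff's bound directly; strictly speaking this is inaccurate, since a uniformly random permutation assigns coordinates to blocks of \emph{fixed} size $d/J$ without replacement, so the counts are multivariate hypergeometric rather than multinomial. Your proposal identifies exactly this point and supplies the justification the paper glosses over: Hoeffding's MGF-domination of the hypergeometric by the corresponding binomial (equivalently, negative association of the occupancy indicators) is what licenses carrying the binomial Chernoff bound over to sampling without replacement, with the same mean $s_{\mathrm{exact}}/J$. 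You also make explicit two details left implicit in the paper: the reduction to the worst case $|S| = s_{\mathrm{exact}}$ via stochastic monotonicity of the upper tail in $|S|$, and the restriction $0 < \Delta \leq 1$ under which the constant $3$ in the exponent of the two-sided bound is actually valid --- the theorem as stated claims all $\Delta > 0$, a slight overstatement shared by the paper's proof, though harmless in context since the result is only ever invoked with $\Delta = 0.1$.
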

It will be convenient to fix a value of $\Delta$, say $\Delta = 0.1$. An immediate consequence of Theorem~\ref{thm:equalsparsity} is that one can improve upon the query complexity of Theorem~\ref{thm:Grad_Estimate_Error}:
\begin{corollary}
\label{cor:block error bound}
Choose $U$ uniformly at random. For fixed $x\in\mathbb{R}^d$ the error bound \eqref{eq:ErrorBound2} in Theorem~\ref{thm:Grad_Estimate_Error} still holds, now with probability $1 - \mathcal{O}\left(J\exp(\frac{-0.01s_{\mathrm{exact}}}{3J})\right)$, for $s :=s_{\mathrm{block}} \geq 1.1s_{\mathrm{exact}}/J$ (and all other parameters the same).
\end{corollary}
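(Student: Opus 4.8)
The plan is to combine the two preceding results by conditioning on the almost-equisparse event. First I would instantiate Theorem~\ref{thm:equalsparsity} at the fixed value $\Delta = 0.1$, producing an event $E$ on which $\|g^{(j)}(x)\|_0 \leq 1.1\, s_{\mathrm{exact}}/J$ holds simultaneously for every block $j$, and which occurs with probability at least $1 - 2J\exp(-0.01\, s_{\mathrm{exact}}/(3J))$. The essential observation is that Theorem~\ref{thm:Grad_Estimate_Error}, when unpacked, requires only two ingredients: that the block gradient being recovered is $s$-sparse, and that the measurement matrix $Z$ satisfies the restricted isometry property used by CoSaMP (which the choice $m = b_1 s \log(d/J)$ secures, up to failure probability $(s/d)^{b_2 s}$). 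The literal hypothesis $s \geq s_{\mathrm{exact}}$ is merely the crudest way to guarantee the first ingredient. On $E$, however, the first ingredient already holds with the much smaller value $s = s_{\mathrm{block}} \geq 1.1\, s_{\mathrm{exact}}/J$, so I may run Algorithm~\ref{algo:grad estimate} at this reduced sparsity level and still recover $g^{(j)}$.

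Next I would make the conditioning rigorous. The randomness that determines $E$ is the choice of permutation $U$, whereas the randomness in Theorem~\ref{thm:Grad_Estimate_Error} is the Rademacher draw of the sample directions $z_1,\ldots,z_m$ that build $Z$; these two sources are independent. Hence conditioning on $E$ leaves the law of $Z$ unchanged, and the CoSaMP recovery guarantee --- and therefore the error bound \eqref{eq:ErrorBound2} --- continues to hold verbatim conditionally on $E$, now with $s = s_{\mathrm{block}}$, failing with conditional probability at most $(s_{\mathrm{block}}/d)^{b_2 s_{\mathrm{block}}}$.

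Finally I would remove the conditioning with a union bound: \eqref{eq:ErrorBound2} can fail only if $E$ fails or if CoSaMP fails on $E$, so the total failure probability is at most $2J\exp(-0.01\, s_{\mathrm{exact}}/(3J)) + (s_{\mathrm{block}}/d)^{b_2 s_{\mathrm{block}}}$. It remains to absorb the second term into the first. Comparing logarithms, the CoSaMP term has log-magnitude of order $b_2 (s_{\mathrm{exact}}/J)\log(d/s_{\mathrm{block}})$, which exceeds the exponent $0.01\, s_{\mathrm{exact}}/(3J)$ of the equisparsity term by a factor logarithmic in $d$; the CoSaMP failure is therefore negligible against the equisparsity failure, and the total collapses to $1 - \mathcal{O}(J\exp(-0.01\, s_{\mathrm{exact}}/(3J)))$, as claimed.

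I expect the one genuinely delicate step to be the independence argument of the second paragraph: one must confirm that conditioning on the $U$-dependent equisparsity event does not perturb the sampling-matrix randomness on which Theorem~\ref{thm:Grad_Estimate_Error} rests, so that the conditional recovery guarantee is exactly the unconditional one with $s_{\mathrm{exact}}$ replaced by $s_{\mathrm{block}}$. Everything downstream --- the union bound and the elementary comparison of two exponentially small quantities --- is routine.
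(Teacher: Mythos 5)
Your proposal is correct and follows essentially the same route as the paper's proof: apply Theorem~\ref{thm:equalsparsity} with $\Delta=0.1$ to get block sparsity $1.1s_{\mathrm{exact}}/J \leq s$, invoke the independence of the block-assignment (permutation) randomness and the Rademacher sampling-direction randomness so that the recovery guarantee of Theorem~\ref{thm:Grad_Estimate_Error} applies at the reduced sparsity level, and then note that the CoSaMP failure term $(s/d)^{b_2 s}$ is dominated by the equisparsity failure term since $s \ll d$. The only cosmetic difference is that you combine failure events by a union bound while the paper multiplies the two success probabilities; both yield the stated $1-\mathcal{O}\bigl(J\exp(\tfrac{-0.01 s_{\mathrm{exact}}}{3J})\bigr)$ bound.
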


This allows us to use approximately $J$ times fewer queries per iteration.

\subsection{Further reducing the required randomness}
\label{sec:Cyclic}
As discussed in \citep{cai2020zeroth}, one favorable feature of using a compressed sensing based gradient estimator is the error bound \eqref{eq:ErrorBound2} is {\em universal}. That is, it holds for all $x\in \mathbb{R}^{d}$ for the {\em same set of sample directions} $\{z_{i}\}_{i=1}^{m} \subset \mathbb{R}^{d/J}$. So, instead of resampling new vectors at each iteration we may use {\em the same} sampling directions {\em for each
block and each iteration}. Thus, only $md/J = \tilde{\mathcal{O}}(s_{\mathrm{exact}}d/J^{2})$ binary random variables need to be sampled, stored and transmitted in ZO-BCD. Remarkably, one can do even better by choosing as sample directions a subset of the rows of a circulant matrix. Recall a circulant matrix of size $d/J\times d/J$, generated by $v\in\mathbb{R}^{d/J}$, has the following form:
\begin{align}
\label{CirculantMatrix}
\mathcal{C}(v)=\begin{pmatrix} v_{1} & v_2 & \cdots & v_{d/J} \\ v_{d/J} & v_{1} & \cdots & v_{d/J-1} \\ \vdots & \ddots & \ddots & \vdots \\ v_2 & \cdots & v_{d/J} & v_{1} \end{pmatrix} .
\end{align}
Equivalently, $\mathcal{C}(v)$ is the matrix with rows $\mathcal{C}_{i}(v)$ where:
$$
\mathcal{C}_{i}(v)\in\mathbb{R}^{d/J} \quad \textnormal{ and } \quad \mathcal{C}_{i}(v)_{j} = v_{i+j-1} .
$$

By exploiting recent results in signal processing, we show:

\begin{theorem}
\label{cor:Circulant}
Assign blocks randomly as in Corollary~\ref{cor:block error bound}. Let $z\in\mathbb{R}^{d/J}$ be a Rademacher random vector. Fix $s :=s_{\mathrm{block}} \geq 1.1s_{\mathrm{exact}}/J$. Choose a random subset $\Omega = \{j_1,\ldots, j_m\} \subset \{1,\ldots, d/J\}$ of cardinality $m = b_3s\log^2(s)\log^{2}(d/J)$ and let $z_{i} = \mathcal{C}_{j_i}(z)$ for $i= 1,\ldots, m$. Let $\delta = 2\sqrt{\sigma/H}$. For fixed $x \in \mathbb{R}^d$ the error bound \eqref{eq:ErrorBound2} in Theorem~\ref{thm:Grad_Estimate_Error} still holds, now with probability at least
\begin{equation*}
1 - 2J\exp\left(\frac{-0.01s_{\mathrm{exact}}}{3J}\right) - \left(d/J\right)^{\log(d/J)\log^2(4s)}.
\end{equation*}
\end{theorem}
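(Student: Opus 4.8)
The plan is to observe that the only structural change from Theorem~\ref{thm:Grad_Estimate_Error} to the present statement is the construction of the measurement matrix $Z$: instead of a matrix with i.i.d.\ Rademacher rows, its rows are now $m$ randomly chosen rows $\mathcal{C}_{j_i}(z)$ of the circulant matrix generated by a single Rademacher vector $z$. The proof of Theorem~\ref{thm:Grad_Estimate_Error} decomposes into two independent ingredients: (i) a finite-difference and noise analysis that controls the gap between $y$ and $Zg^{(j)}$, and (ii) the CoSaMP recovery guarantee, which requires only that $Z$ satisfy a restricted isometry property (RIP) of order $4s$ with a suitably small isometry constant. Ingredient (i) uses only the choice $\delta = 2\sqrt{\sigma/H}$ together with the weakly sparse Hessian (Assumption~\ref{assumption:WeakSparsity}), and is entirely agnostic to how the rows of $Z$ were generated; it therefore carries over verbatim, reproducing the additive term $2\tau\sqrt{\sigma H}$ and the contraction factor $\rho^n$ once RIP is in force. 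Thus everything reduces to re-establishing RIP for the new, circulant-structured $Z$.

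First I would invoke the RIP theory for partial random circulant matrices developed by Krahmer, Mendelson and Rauhut. Their main result states that if the rows of a normalized $m \times (d/J)$ partial circulant matrix generated by a Rademacher vector are chosen at random, and if
\begin{equation*}
m \geq c\, s\, \max\!\left\{\log^2(s)\log^2(d/J),\ \log(1/\eta)\right\},
\end{equation*}
then the matrix satisfies RIP of order $s$ with isometry constant below any prescribed threshold, with probability at least $1 - \eta$. Setting the target RIP order to $4s$ (the order needed by CoSaMP) and taking the failure probability $\eta = (d/J)^{-\log(d/J)\log^2(4s)}$, matching the last term of the stated bound, one checks that $\log(1/\eta) = \log^2(d/J)\log^2(4s)$, which is of the same order as $\log^2(s)\log^2(d/J)$ since $\log(4s)\asymp\log(s)$. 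Hence the single choice $m = b_3 s \log^2(s)\log^2(d/J)$ simultaneously satisfies both branches of the maximum for an appropriate constant $b_3$, delivering the required RIP on the complement of an event of probability at most $\eta$.

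Finally I would combine the two sources of randomness by a union bound. The block assignment $U$ is drawn as in Corollary~\ref{cor:block error bound}, so Theorem~\ref{thm:equalsparsity} with $\Delta = 0.1$ guarantees that every block satisfies $\|g^{(j)}(x)\|_0 \leq 1.1\, s_{\mathrm{exact}}/J \leq s_{\mathrm{block}} = s$ outside an event of probability at most $2J\exp(-0.01\, s_{\mathrm{exact}}/(3J))$. On the intersection of the complements of this event and the RIP-failure event, the selected block gradient is genuinely $s$-sparse and $Z$ satisfies RIP of order $4s$, so the CoSaMP guarantee applies and \eqref{eq:ErrorBound2} holds. Subtracting the two failure probabilities yields exactly the claimed bound.

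The main obstacle is not any step in this reduction but the RIP statement itself for subsampled circulant matrices: unlike the i.i.d.\ Rademacher case, the rows of $\mathcal{C}(z)$ are strongly dependent, so the isometry constant cannot be controlled by elementary entrywise concentration. The sharp $s\log^2(s)\log^2(d/J)$ sampling rate instead requires bounding the supremum of a second-order chaos process, via generic-chaining / Dudley-type arguments. Since we are content to cite this as an established signal-processing result, the remaining work is purely bookkeeping: aligning the CoSaMP RIP order $4s$ with the order in the cited theorem and verifying that the prescribed $m$ meets both branches of the maximum, which the computation above confirms.
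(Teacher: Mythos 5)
Your proposal is correct and follows essentially the same route as the paper: reduce everything to establishing RIP of order $4s$ for the subsampled Rademacher-circulant matrix via the Krahmer--Mendelson--Rauhut suprema-of-chaos-processes theorem, note that the finite-difference/noise analysis and CoSaMP guarantee from Theorem~\ref{thm:Grad_Estimate_Error} then apply unchanged (they do, since the circulant rows still have $\pm 1$ entries), and combine the RIP event with the equisparsity event of Theorem~\ref{thm:equalsparsity} (the paper multiplies the two probabilities using independence rather than union-bounding, but the resulting bound $1 - 2J\exp(-0.01 s_{\mathrm{exact}}/(3J)) - (d/J)^{-\log(d/J)\log^2(4s)}$ is identical). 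Your parametrization of the failure probability $\eta$ and the check that $m = b_3 s\log^2(s)\log^2(d/J)$ suffices is just a rephrasing of the paper's direct application of the cited theorem with $\delta = 0.3843$.
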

Hence, one only needs $d/J$ binary random variables (to construct $z$) and $m = \tilde{\mathcal{O}}\left(s_{\mathrm{exact}}/J\right)$ randomly selected integers for the {\em entire} algorithm. Note (partial) circulant matrices allow for a {\em fast multiplication}, further reducing the computational complexity of Algorithm~\ref{algo:grad estimate}.  

\section{The proposed algorithm: ZO-BCD}
\label{sec:zo-bcd}
Let us now introduce our new algorithm. We consider two variants, distinguished by the kind of sampling directions used. ZO-BCD-R uses {\bf R}ademacher sampling directions. ZO-BCD-RC uses {\bf R}ademacher-{\bf C}irculant sampling directions, as described in Section~\ref{sec:Cyclic}. For simplicity, we present our algorithm for randomly selected, equally sized coordinate blocks. With minor modifications our results still hold for user-defined and/or unequally sized blocks (see Appendix~\ref{sec:UnequalBlocks}). The following theorem guarantees both variants converge at a sublinear rate to within a certain error tolerance. As the choice of block in each iteration is random, our results are necessarily probabilistic. We say $x_{K}$ is an $\varepsilon$-optimal solution if $f(x_{K}) - f^{*} \leq \varepsilon$.

\begin{algorithm}[tb]
\caption{ZO-BCD} \label{algo:zo-bcd}
\begin{algorithmic}[1]
   \STATE {\bfseries Input:} $x_0$: initial point; $s$: gradient sparsity level; $\alpha$: step size; $\delta$: query radius; $J$: number of blocks. \\
   \STATE $s_{\textnormal{block}} \gets 1.1s/J$
   \STATE Randomly divide $x$ into $J$ equally sized blocks. \\
   \IF{ZO-BCD-R}
        \STATE $m\leftarrow b_1s_{\mathrm{block}}\log(d/J)$
        \STATE Generate Rademacher random vectors $z_{1},\ldots, z_{m}$
   \ELSIF{ZO-BCD-RC}
        \STATE $m \leftarrow b_3s_{\mathrm{block}}\log^2(s_{\mathrm{block}})\log^{2}(d/J)$
        \STATE Generate Rademacher random vector $z$.
        \STATE Randomly choose $\Omega\subset \{1,\ldots, d/J\}$ with $|\Omega| = m$
        \STATE Let $z_{i} = \mathcal{C}_{j_i}(z)$ for $i=1,\ldots m$ and $j_i\in\Omega$
    \ENDIF
   \FOR{$k=0$ {\bfseries to} $K$}
   \STATE Select a block $j\in \{1,\ldots,J\}$ uniformly at random.
   \STATE $\hat{g}_k^{(j)}\leftarrow$Gradient Estimation$(x_k^{(j)},s_{\mathrm{block}},\delta,\{z_i\}_{i=1}^{m})$
   \STATE $x_{k+1}\leftarrow x_k-\alpha\hat{g}_k^{(j)}$
   \ENDFOR
   \STATE {\bfseries Output:} $x_K$: estimated optimum point.
\end{algorithmic}
\end{algorithm}

\begin{theorem}
\label{thmNonReg}
Assume $f$ satisfies Assumptions~\ref{assumption:Lipschitz_Diff}--\ref{assumption:WeakSparsity}. Define: 
\begin{align*}
  &L_{\max}=\max_{j}L_j \quad\text{and}\quad c_1=2JL_{\max}\mathcal{R}^2(x_0).
\end{align*}
Assume $4\rho^{4n}+\frac{16\tau^2\sigma H}{c_1L_{\max}}<1$. Choose sparsity $s \geq s_{\text{exact}}$, step size $\alpha=\frac{1}{L_{\max}}$ and query radius $\delta = 2\sqrt{\sigma/H}$. Choose the number of CoSaMP iterations $n$ and error tolerance $\varepsilon$ such that:
\begin{equation*}
\frac{c_1}{2}\left(2\rho^{2n}+\sqrt{4\rho^{4n}+\frac{16\tau^2\sigma H}{c_1\zeta L_{\max}}}\right)<\varepsilon <f(x_0)-f^* .
\end{equation*}
With probability at least $1 - \zeta - \tilde{\mathcal{O}}\left( \frac{J^2}{\varepsilon}\exp\left(\frac{-0.01s_{\mathrm{exact}}}{3J}\right)\right)$ ZO-BCD-R finds an $\varepsilon-$optimal solution in $\tilde{\mathcal{O}}\left(s/\varepsilon\right)$ queries,  requiring $\tilde{\mathcal{O}}(sd/J^2)$ FLOPS per iteration and $\tilde{\mathcal{O}}(sd/J^{2})$ total memory. With probability at least
\begin{equation*}
    1 - \tilde{\mathcal{O}}\left(\frac{J^2}{\varepsilon}\exp\left(-\frac{0.01s_{\mathrm{exact}}}{3J}\right)\right) - \left(d/J\right)^{\log(d/J)\log^2(4.4s/J)}
\end{equation*}
ZO-BCD-RC finds an $\varepsilon-$optimal solution using $\tilde{\mathcal{O}}\left(s/\varepsilon\right)$ queries, $\tilde{\mathcal{O}}(d/J)$ FLOPS per iteration and $\mathcal{O}(d/J)$ total memory.
\end{theorem}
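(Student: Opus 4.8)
The plan is to run an inexact block-gradient-descent analysis, with three things tracked carefully: the randomness of the block selection, the high-probability (rather than deterministic) nature of the estimate in \eqref{eq:ErrorBound2}, and the quadratic error floor created by the additive noise term. First I would establish a one-step descent inequality. Applying block Lipschitz differentiability (Assumption~\ref{assumption:Lipschitz_Diff}) to the update $x_{k+1}=x_k-\alpha\hat g_k^{(j)}$ with $\alpha=1/L_{\max}$, and completing the square, gives
\begin{equation*}
f(x_{k+1}) \le f(x_k) - \tfrac{1}{2L_{\max}}\|g_k^{(j)}\|_2^2 + \tfrac{1}{2L_{\max}}\|\hat g_k^{(j)} - g_k^{(j)}\|_2^2 .
\end{equation*}
On the event that Theorem~\ref{thm:Grad_Estimate_Error} holds, I substitute \eqref{eq:ErrorBound2} and use $(a+b)^2\le 2a^2+2b^2$ to split the error into a relative part $2\rho^{2n}\|g_k^{(j)}\|_2^2$ and an absolute floor $8\tau^2\sigma H$. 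The key trick is to convert the relative part into a \emph{linear} term in the optimality gap via the standard smoothness--convexity bound $\|g_k^{(j)}\|_2^2\le 2L_{\max}(f(x_k)-f^*)$, which follows from Assumptions~\ref{assumption:Lipschitz_Diff}--\ref{assumption:BoundedLevelSets}.

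Next I would take the conditional expectation over the uniformly random block $j$, using that the blocks partition the coordinates so $\mathbb{E}_j\|g_k^{(j)}\|_2^2=\|g_k\|_2^2/J$, and then lower bound $\|g_k\|_2\ge (f(x_k)-f^*)/\mathcal{R}(x_0)$ by convexity (Assumption~\ref{assumption:Convexity}) together with the bounded level set radius. With $c_1=2JL_{\max}\mathcal{R}^2(x_0)$ and $\Delta_k:=\mathbb{E}[f(x_k)-f^*]$, Jensen's inequality then yields the recursion
\begin{equation*}
\Delta_{k+1} \le \Delta_k - \tfrac{1}{c_1}\Delta_k^2 + 2\rho^{2n}\Delta_k + \tfrac{4\tau^2\sigma H}{L_{\max}} .
\end{equation*}
The positive fixed point of this quadratic recursion is exactly $\tfrac{c_1}{2}\bigl(2\rho^{2n}+\sqrt{4\rho^{4n}+16\tau^2\sigma H/(c_1 L_{\max})}\bigr)$, which is the error floor in the statement; the hypothesis $4\rho^{4n}+16\tau^2\sigma H/(c_1L_{\max})<1$ is what guarantees genuine contraction toward this floor. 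A standard $\mathcal{O}(1/k)$ analysis of the quadratic term shows $\Delta_K\le\varepsilon$ once $K=\tilde{\mathcal{O}}(c_1/\varepsilon)=\tilde{\mathcal{O}}(J/\varepsilon)$, and a Markov inequality converts the expected-gap bound into the probabilistic $\varepsilon$-optimality statement, which is where $\zeta$ (and the $1/\zeta$ scaling of the noise contribution) enters.

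It then remains to assemble the failure probabilities and the resource counts. The per-iteration estimate can fail in two ways: the equisparsity of the random partition at $x_k$, governed by Corollary~\ref{cor:block error bound}, and the sparse-recovery step. Crucially, the recovery bound is \emph{universal} over all $x$ for a single draw of the sampling directions (Section~\ref{sec:Cyclic}), so for ZO-BCD-R it is a one-time negligible cost absorbed into $\tilde{\mathcal{O}}(\cdot)$, while for ZO-BCD-RC it is paid exactly once and contributes the extra $(d/J)^{\cdots}$ term from Theorem~\ref{cor:Circulant}. The equisparsity event, being per-iterate, is union-bounded over the $K=\tilde{\mathcal{O}}(J/\varepsilon)$ iterations, turning $2J\exp(-0.01s_{\mathrm{exact}}/(3J))$ into the $\tilde{\mathcal{O}}\bigl(\tfrac{J^2}{\varepsilon}\exp(\cdots)\bigr)$ term. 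For complexity, each iteration costs $m+1=\tilde{\mathcal{O}}(s/J)$ queries, so the total is $K\cdot\tilde{\mathcal{O}}(s/J)=\tilde{\mathcal{O}}(s/\varepsilon)$; the CoSaMP matrix--vector products cost $\tilde{\mathcal{O}}(sd/J^2)$ FLOPS and memory for dense Rademacher matrices, but collapse to $\tilde{\mathcal{O}}(d/J)$ FLOPS and $\mathcal{O}(d/J)$ memory once a partial circulant matrix with FFT-based multiplication is used.

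The main obstacle I expect is the coupling between the probabilistic estimate and the expectation-based recursion. The descent inequality is valid only on the good event of \eqref{eq:ErrorBound2}, yet the recursion must be iterated in expectation, so one must condition on the intersection of good events across all $K$ iterations and check that this conditioning does not corrupt the $\mathbb{E}_j$ averaging step. More delicate still is justifying the union bound over the \emph{random, trajectory-dependent} iterates $x_k$: because $x_k$ depends on the fixed block partition, ``equisparsity at $x_k$'' is not a clean fixed-$x$ event, so one likely has to union-bound over the $\tilde{\mathcal{O}}(s_{\mathrm{exact}})$-entropy family of possible supports rather than over literal iterates. Cleanly separating this per-iterate equisparsity accounting from the single universal draw of the sensing directions is the part requiring the most care; the recursion solution and the complexity tallies are then routine.
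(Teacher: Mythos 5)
Your proposal is correct in substance and, modulo packaging, follows the same route as the paper. The paper's proof consists of exactly your two ingredients plus a citation: it proves the PL-type bound $f(x)-f^*\geq\frac{1}{2L_{\max}}\|g^{(j)}(x)\|_2^2$ (Lemma~\ref{lemma:block lipschitz}) and the one-step inexactness bound $V_j(x_k,t')-V_j(x_k,t^*)\leq 2\rho^{2n}(f(x_k)-f^*)+\frac{4\tau^2\sigma H}{L_{\max}}$ obtained from \eqref{eq:ErrorBound2} via $(a+b)^2\leq 2a^2+2b^2$ (Lemma~\ref{lemma:EtaTheta}), and then, rather than deriving and solving an expectation recursion, invokes Theorem~6.1 of \citep{tappenden2016inexact}, which is precisely the inexact randomized block-coordinate-descent result your recursion-plus-Markov argument would re-prove; your fixed point $\frac{c_1}{2}\bigl(\eta+\sqrt{\eta^2+4\theta/c_1}\bigr)$ with $\eta=2\rho^{2n}$, $\theta=4\tau^2\sigma H/L_{\max}$ reproduces the theorem's error floor exactly, and your probability and complexity accounting (per-iterate equisparsity union-bounded over $K=\tilde{\mathcal{O}}(J/\varepsilon)$ iterations versus the one-time universal cost of the sensing directions; $mK=\tilde{\mathcal{O}}(s/\varepsilon)$ queries; dense versus FFT-based circulant costs) matches the paper's word for word. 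What the citation buys the paper is that the delicate parts of your plan are outsourced: in particular, justifying that $\mathcal{R}(x_0)$ controls $\|x_k-x^*\|$ even though inexact steps need not decrease $f$ monotonically, and the precise conversion of the expectation recursion into a $1-\zeta$ statement in which $1/\zeta$ sits inside the square root of the $\varepsilon$-threshold rather than inflating $K$ (plain Markov at iteration $K$ is the right idea but the bookkeeping is nontrivial); your self-contained route would have to redo these carefully, and is the part of your sketch that is genuinely unfinished. Conversely, the subtlety you flag about union-bounding equisparsity over random, trajectory-dependent iterates is real, but it is not an idea the paper supplies and you are missing: the paper's proof applies the fixed-$x$ results (Corollary~\ref{cor:block error bound} and Theorem~\ref{cor:Circulant}) at each iterate $x_k$ with a plain union bound and no further comment, so on this point your proposal is, if anything, more careful than the published argument.
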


Thus, up to logarithmic factors, ZO-BCD achieves the same query complexity as ZORO \citep{cai2020zeroth} with a much lower per-iteration computational cost. We pay for the improved computational and memory complexity of ZO-BCD-RC with a slightly worse theoretical query complexity (by a logarithmic factor) due to the requirements of Theorem~\ref{cor:Circulant}. First order block coordinate descent methods typically have a probability of success $1 -\zeta$, thus in switching to zeroth-order this probability decreases by a factor which is negligible for truly huge problems ({\em e.g.} $d \approx 10^{6}$ and $s_{\mathrm{exact}} \approx 10^{4}$). For smaller problems we find randomly re-assigning the decision variables to blocks every $J$ iterations is a good way to increase ZO-BCD's probability of success.

\section{Sparse wavelet transform attacks} 
\label{sec:sparse_attack}
Adversarial attacks on neural network based classifiers is a popular application and benchmark for zeroth-order optimizers \citep{chen2017zoo,chen2019zo,ilyas2018black,modas2019sparsefool}. Specifically, let $F(x) \in [0,1]^{C}$ denote the predicted probabilities returned by the model for input signal $x$. Then the goal is to find a small distortion $\delta$ such that that the model's top-1 prediction on $x+\delta$ is no longer correct: $\mathrm{argmax}_{c=1,\ldots, C}F_{c}(x+\delta) \neq \mathrm{argmax}_{c=1,\ldots, C}F_{c}(x)$. Because we only have access to the logits, $F(x)$, not the internal workings of the model, we are unable to compute $\nabla F(x)$ and hence this problem is of zeroth order. Recently, \citep{cai2020zeroth} showed it is reasonable to assume the attack loss function exhibits (approximate) gradient sparsity, and proposed generating adversarial examples by adding a distortion to the victim image that is sparse in the image pixel domain. We extend this and propose a novel {\em sparse wavelet transform attack}, which searches for an adversarial distortion $\delta^\star$ in the wavelet domain:
\begin{equation} \label{eq:attack_object_function}
    \delta^\star = \argmin_\delta f(\mathrm{IWT}(\mathrm{WT}(x)+\delta))+\lambda\|\delta\|_0,
\end{equation}
where $x$ is a given image/audio signal, $f$ is the Carlini-Wagner loss function \citep{chen2017zoo}, $\mathrm{WT}$ is the chosen (discrete or continuous) wavelet transform, and $\mathrm{IWT}$ is the corresponding inverse wavelet transform. As
wavelet transforms extract the important features of the data, we expect the gradients of this new loss function to be even sparser than those of the corresponding pixel-domain loss function \citep[Figure~1]{cai2020zeroth}. Moreover, the inverse wavelet transform spreads the energy of the sparse perturbation, resulting in more natural-seeming attacked signals, as compared with pixel-domain sparse attacks \citep[Figure~6]{cai2020zeroth}.

\begin{enumerate}
\vspace{-0.05in}
    \item \textbf{Sparse DWT attacks.} The discrete wavelet transform (DWT) is a well-known method for data compression and denoising \citep{mallat1999wavelet,cai2012image}. Many real-world media data are compressed and stored in the form of DWT coefficients (\textit{e.g.} JPEG-2000 for images and Dirac for videos), thus attacking the wavelet domain is more direct in these cases. Since DWT does not increase the problem dimension\footnote{When using periodic boundary extension. If another boundary extension is used, the dimension of the wavelet coefficients may increase slightly, depending on the size of the filters and the level of the transform.}, the query complexity of sparse wavelet-domain attacks is the same as sparse pixel-domain attacks. An interesting variation is to only attack the important ({\em i.e.} large) wavelet coefficients. We explore this further in Section~\ref{sec:experiments}. This can reduce the attack problem dimension by $60\%$--$80\%$ for typical image datasets. Nevertheless, for large, modern color images, this dimension can still be massive.
    
    \item \textbf{Sparse CWT attacks.} For oscillatory signals, the continuous wavelet transform (CWT) with analytic wavelets is preferred \citep{mallat1999wavelet,lilly2010analytic}. Unlike DWT, the dimension of the CWT coefficients is much larger than the original signal dimension. For example, attacking even $1$ second audio clips in a CWT domain results in a problem of size $d > 1.7$ million (see Section~\ref{sec:AudioAttack})!
\end{enumerate}


The idea of adversarial attacks on DWT coefficients was also proposed in \citep{wavetransform}, but they assume a white-box model and study only dense attacks on discrete Haar wavelets. \citep{din2020steganographic} considers a ``steganographic'' attack, where the important wavelet coefficents of a target image are ``hidden'' within the wavelet transform coefficients of a victim image. We appear to be the first to connect (both discrete and continuous) wavelet transforms to sparse zeroth-order adversarial attacks.

\section{Empirical results} \label{sec:experiments}

In this section, we first show the empirical advantages of ZO-BCD with synthetic examples. Then, we demonstrate the performance of ZO-BCD in  two real-world applications: (i) sparse DWT attacks on images, and (ii) sparse CWT attacks on audio signals. We compare the two versions of ZO-BCD (see Algorithm~\ref{algo:zo-bcd}) against two venerable zeroth-order algorithms---FDSA \citep{Kiefer1952} and SPSA\footnote{SPSA using Rademacher sample directions coincides with Random Search \citep{nesterov2017random}.} \citep{spall1998overview}---as well as three more recent contributions: ZO-SCD \citep{chen2017zoo}, ZORO \citep{cai2020zeroth} and LM-MA-ES \citep{loshchilov2018large}. ZO-SCD is a zeroth-order (non-block) coordinate descent method. ZORO uses a similar gradient estimator as ZO-BCD, but computes the full gradient. LM-MA-ES is a recently proposed extension of CMA-ES \citep{hansen2001completely} to the large-scale setting. In Section~\ref{sec:ImageAttack}, we consider ZO-SGD \citep{zo-sgd}, a variance-reduced version of SPSA, as this has empirically shown better performance on this task than SPSA\footnote{Variance reduction did little to improve the performance of SPSA in the experiments of Section~\ref{sec:synthetic_experiments}.}. We also consider ZO-AdaMM \citep{chen2019zo}, a zeroth-order method incorporating momentum. The experiments in Section~\ref{sec:synthetic_experiments} were executed from Matlab 2020b on a laptop with Intel i7-8750H CPU and 32GB RAM. The experiments in Sections~\ref{sec:ImageAttack} and \ref{sec:AudioAttack} were executed on a workstation with Intel i9-9940X CPU, 128GB RAM, and two of Nvidia RTX-3080 GPUs. All code is available online at \url{https://github.com/YuchenLou/ZO-BCD}.

\subsection{Synthetic examples}
\label{sec:synthetic_experiments}

\begin{figure}[t]
\centering
\subfloat[Sparse Quadric.]{
\includegraphics[width = 0.95\linewidth]{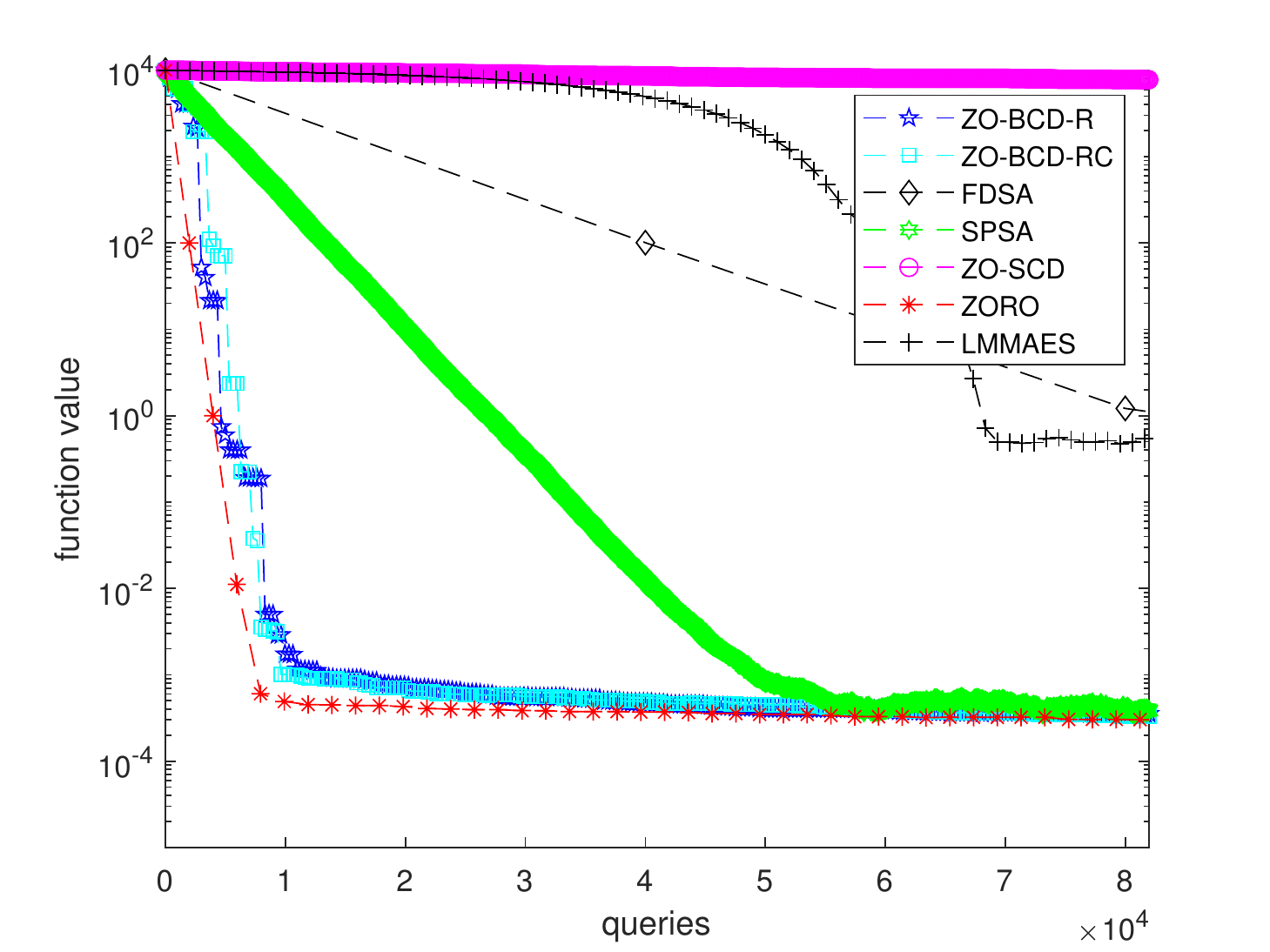} \label{fig:quad}}
\\
\vspace{-0.05in}
\subfloat[Max-$s$-squared-sum.]{
\includegraphics[width = 0.95\linewidth]{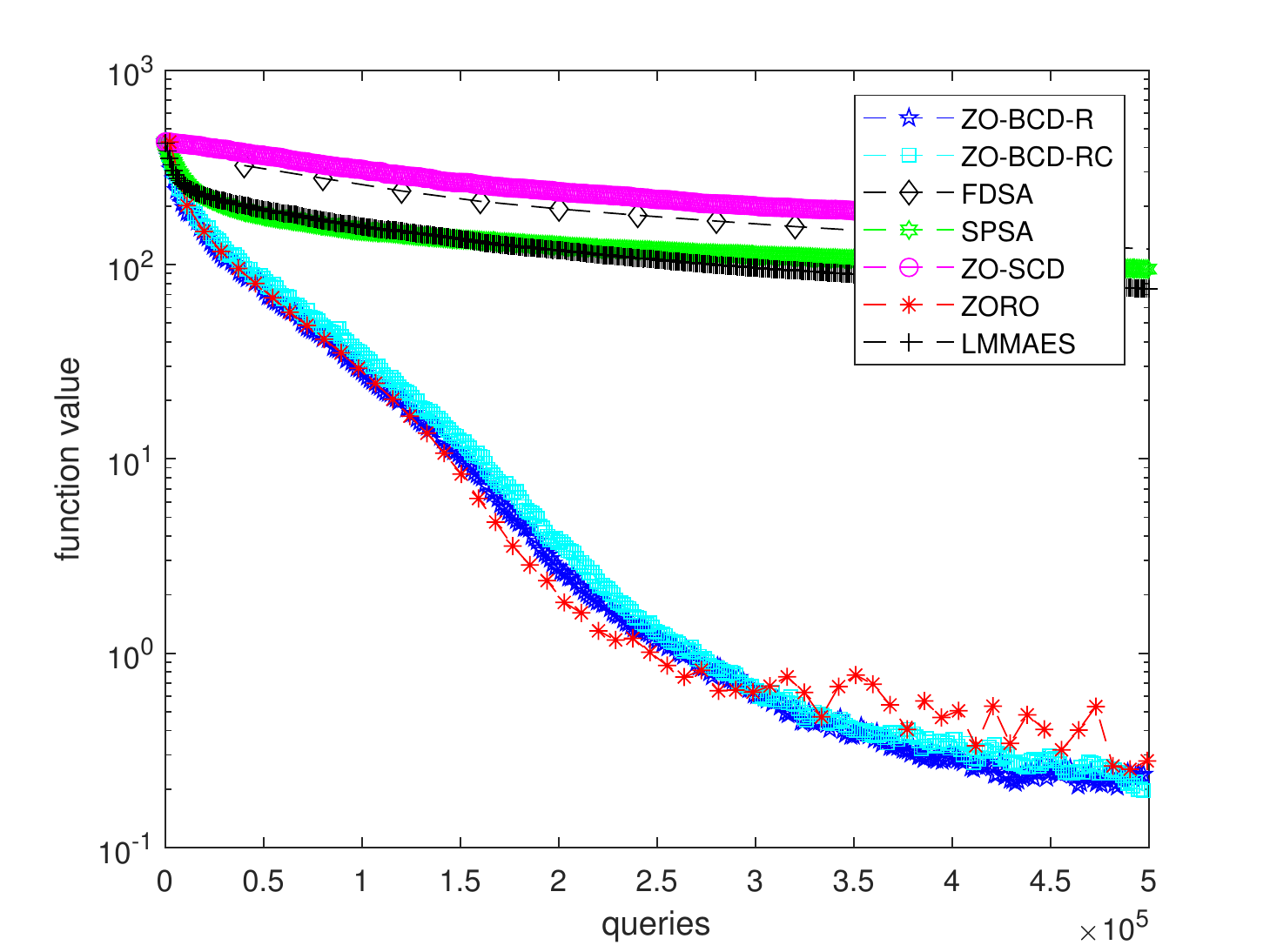} \label{fig:maxk}}
\caption{Function values  \textit{v.s.} queries for ZO-BCD (-R and -RC, with $5$ blocks) and four other representative zeroth-order methods. ZO-BCD is fast and stable (while running faster with less memory).} 
\label{fig:Algo_Comparisons}
\end{figure}

We study the performance of ZO-BCD with noisy oracles on the zeroth-order optimization problem $\minimize_{x\in\mathbb{R}^{d}} f(x)$ for two selected objective functions: 
\begin{itemize}
\vspace{-0.05in}
    \item[(a)] Sparse quadratic function: $f(x) =  \frac{1}{2}x^TAx$, where $A$ is a diagonal matrix with $s$ non-zero entries. 
    \item[(b)] Max-$s$-sum-squared function: $f(x)=\frac{1}{2}\sum_{m_i}^{s} x_{m_i}^2$, where $x_{m_i}$ is the $i$-th largest-in-magnitude entry of $x$. This problem is more complicated than (a) as $m_i$ changes with $x$.
\end{itemize}
We use $d=20,000$ and $s=200$ in both problems, so they have high ambient dimension with sparse gradients. 

As can be seen in Figure~\ref{fig:Algo_Comparisons}, both versions of ZO-BCD effectively exploit the gradient sparsity, and have very competitive performance in terms of queries. 
In particular, ZO-BCD converges more stably than the state-of-the-art ZORO in max-$s$-squared-sum problem while its computational and memory complexities are much lower. SPSA's query efficiency is roughly the same as that of ZO-BCD and ZORO when the gradient support does not change (see Figure~\ref{fig:quad}); however, it is {\em significantly worse} when the gradient support is allowed to change (see Figure~\ref{fig:maxk}).

\paragraph{Number of blocks.} We study the performance of ZO-BCD with different numbers of blocks. The numerical results are summarized in Figure~\ref{fig:varying number of blocks} and Table~\ref{tab:Run_time_varying_blk}. 
Note that the runtime of each query can vary a lot between problems, so we only count the empirical runtime excluding the time of making queries. 
As a rule of thumb, we find that using fewer blocks yields smoother convergence and a more accurate final solution (see Figure~\ref{fig:varying number of blocks}), at the cost of a higher runtime (see Table~\ref{tab:Run_time_varying_blk}).
This phenomenon matches our theoretical result in Theorem~\ref{thmNonReg}. Generally speaking, we recommend using a mild number of blocks to balance between ZO-BCD's speed and convergence performance.

\begin{figure}[t]
\centering
\subfloat[Sparse Quadric.]{
\includegraphics[width = 0.95\linewidth]{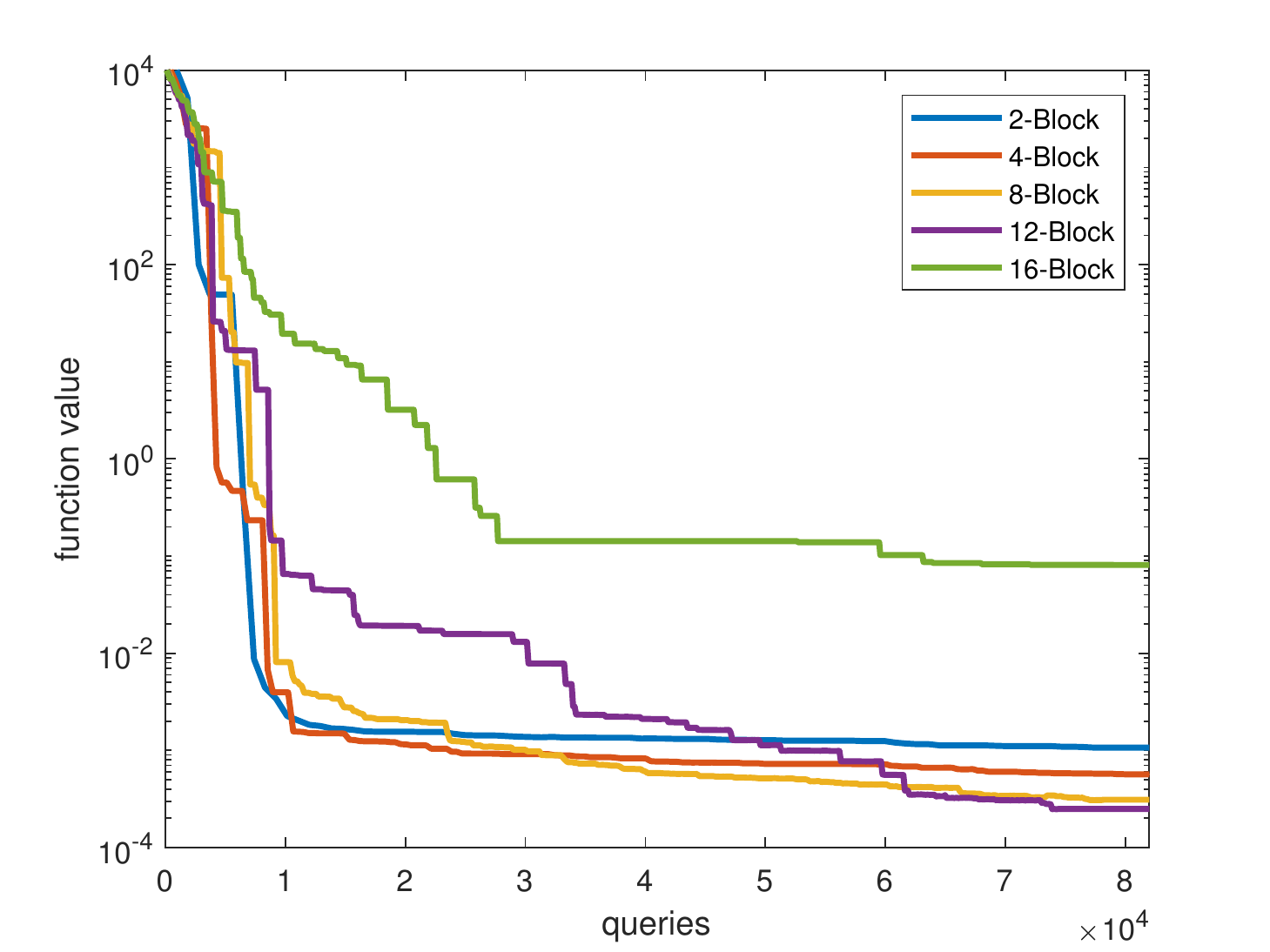} \label{fig:quad_varying_blks}}
\\
\vspace{-0.05in}
\subfloat[Max-$s$-squared-sum.]{
\includegraphics[width = 0.95\linewidth]{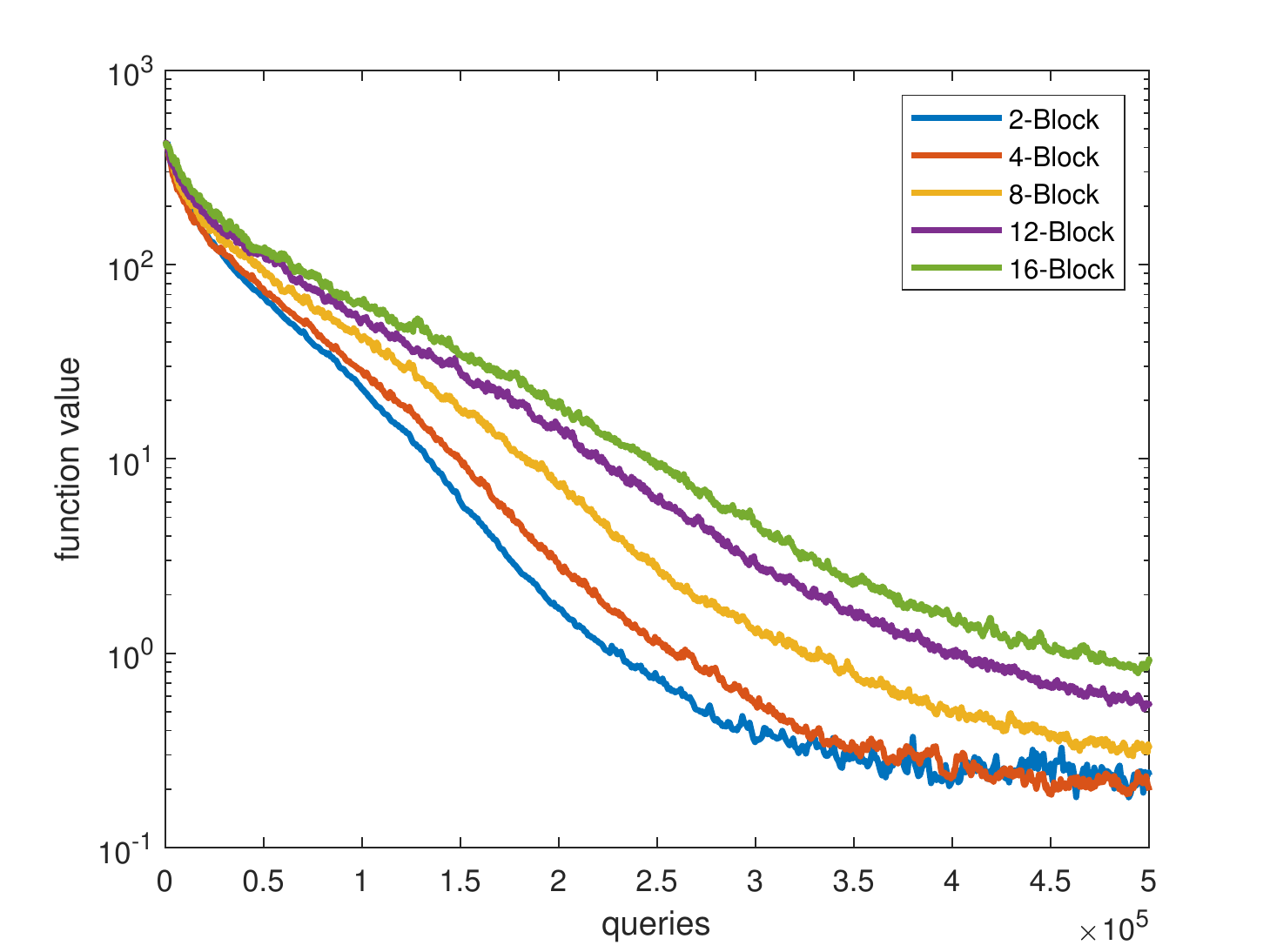} \label{fig:maxk_varying_blks}}
\caption{Function values \textit{v.s.} queries for ZO-BCD-R with different numbers of blocks.} 
\label{fig:varying number of blocks}
\end{figure}

\begin{table}[htb]
\caption{Runtime per iteration and the number of iterations to reach tolerance for ZO-BCD-R with varying number of blocks ($J$).} \label{tab:Run_time_varying_blk}
\vspace{0.15in}
\begin{center}
\begin{small}
\begin{tabular}{c|cc|cc}
\toprule
 & \multicolumn{2}{c|}{Sparse Quadric.}  & \multicolumn{2}{c}{Max-$s$-squared-sum}\\
$J$ & \textsc{Sec/Itr} & \textsc{Itr} to $10^{-2}$ & \textsc{Sec/Itr} & \textsc{Itr} to $10^{0}$ \\
\midrule
~~2  & .1093 sec  & 8 & .1362 sec  &  249\\
~~4  & .0244 sec  & 20 & .0358 sec  &  605\\
~~8  & .0054 sec  & 45 & .0116 sec  &  1651\\
12   & .0026 sec  & 224 & .0054 sec  &  3185\\
16   & .0019 sec  & N/A & .0042 sec  &  5090 \\ 
\bottomrule
\end{tabular}
\end{small}
\end{center}
\end{table}

\paragraph{Scalability.}
Since ZO-BCD and ZORO are the only methods that have competitive convergences in the query complexity experiments (see Figure~\ref{fig:Algo_Comparisons}), it is only meaningful to compare their computational complexities with respect to problem dimension $d$. We record the runtime of ZO-BCD and ZORO for solving the sparse quadratic function with varying problem dimensions, where the stopping condition is set to be $f(x_k)\leq 10^{-2}$ for all tests. Similar to the experiment of varying numbers of blocks, we only count the empirical runtime excluding the time of making queries in this experiment. 
The test results are presented in Figure~\ref{fig:time_vs_dim}, where one can see that ZO-BCD-R has significant speed advantage over ZORO  
and ZO-BCD-RC is even faster, especially when problem dimension is large. 

\begin{figure}[t]
    \centering
    \subfloat[Sparse Quadric.]{\includegraphics[width = 0.95\linewidth]{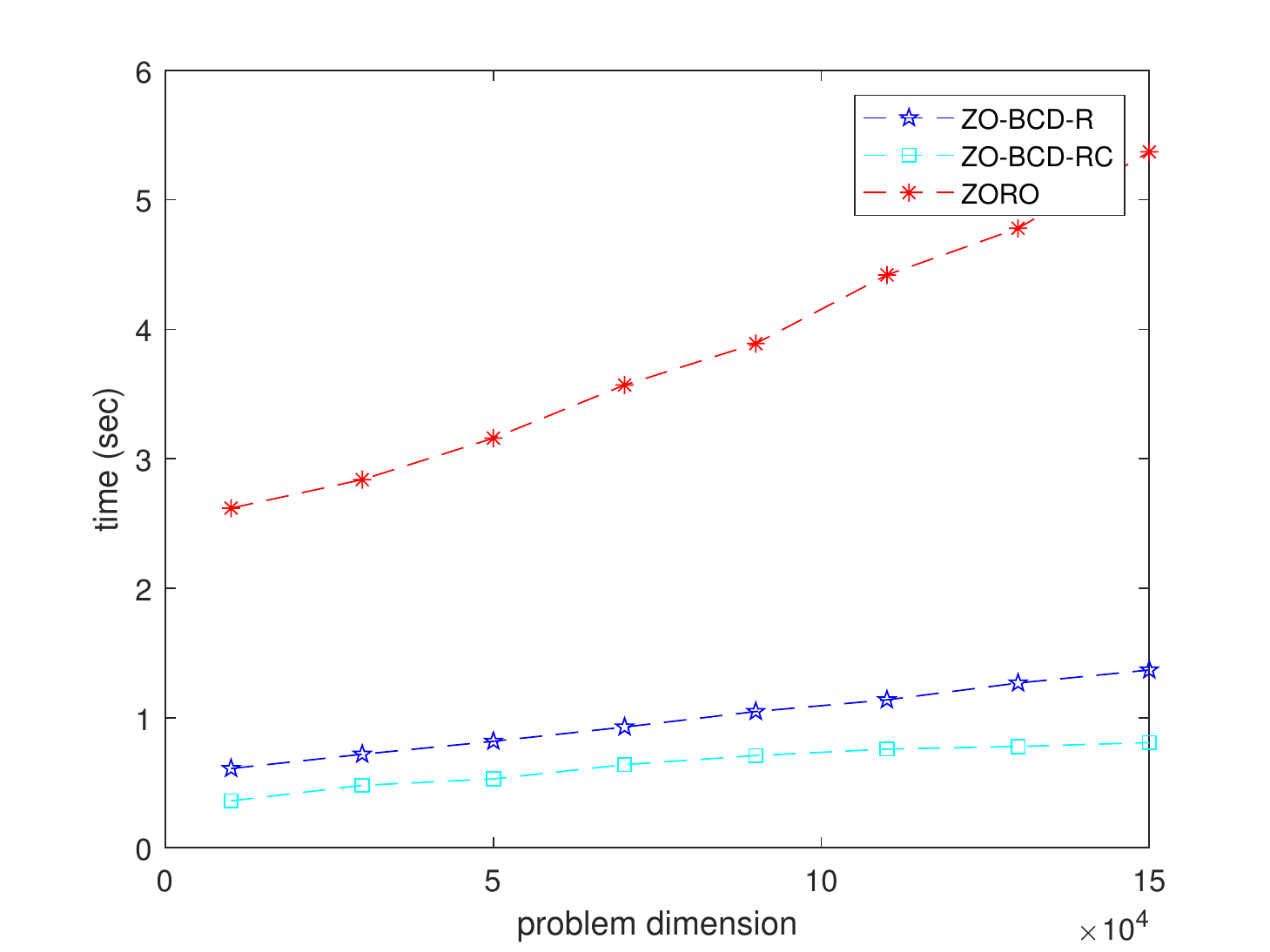}}
    \caption{Runtime \textit{v.s.} problem dimension for ZO-BCD (-R and -RC, with $5$ blocks) and ZORO.}
    \label{fig:time_vs_dim}
\end{figure}

\subsection{Sparse DWT attacks on images}
\label{sec:ImageAttack}
\begin{table}[t]
\caption{Results of untargeted adversarial attack on images using various zeroth-order algorithms. Attack success rate (ASR), average final $\ell_2$ distortion (on pixel domain), average iterations and number of queries till 1st successful attack. ZO-BCD-R(large coeff.) stands for applying ZO-BCD-R to attack only large wavelet coefficients ($\mathrm{abs}\geq0.05$).} \label{tab:attack_result}
\vspace{0.1in}
\begin{center}
\begin{small}
\begin{tabular}{lccc}
\toprule
\textsc{Methods} & \textsc{ASR} & \textsc{$\ell_2$ dist}  & \textsc{Queries}\\
\midrule
ZO-SCD    & 78$\%$  &  57.5 &  2400\\
ZO-SGD    & 78$\%$  &  37.9 &  $\textbf{1590}$\\
ZO-AdaMM  & 81$\%$  &  28.2 &  1720\\
ZORO      & 90$\%$  &  21.1 &  2950 \\
ZO-BCD-R  & 92$\%$  &  14.1 &  2131 \\
ZO-BCD-RC & 92$\%$  &  14.2 &  2090\\
ZO-BCD-R(large coeff.) & $\textbf{96}\%$  &  $\textbf{13.7}$ &   1662 \\
\bottomrule
\end{tabular}
\end{small}
\end{center}
\vspace{-0.05 in}
\end{table}

\begin{table}[h]
\begin{center}
\caption{Results of sparse DWT adversarial attack on images, using ZO-BCD-R and different values of $s$ (sparsity) and $J$ (number of blocks). ZO-BCD-R is robust to the particular choice of $s$ and $J$. Note $d = 676,353$ is the problem dimension.}\label{tab:attack_robustness}
\vspace{0.15in}
\begin{small}
\begin{tabular}{lccc}
\toprule
\textsc{ZO-BCD-R} & \textsc{ASR} & \textsc{$\ell_2$ dist} & \textsc{Queries} \\
\midrule
$J=2000$, $s=0.05d$ & \textbf{93}\%  &  15.3 & 2423\\
$J=4000$, $s=0.05d$ & 91\%  &  14.0 & 2109\\
$J=8000$, $s=0.05d$  & \textbf{93}\%  &  14.8 & 2145\\
$J=12000$, $s=0.05d$ & 90\%  &  13.8 & \textbf{1979}\\
$J=4000$, $s=0.01d$  & 86\% &   22.6 & 2440\\
$J=4000$, $s=0.025d$ & \textbf{93}\% &  16.5 & 2086\\ 
$J=4000$, $s=0.1d$   &  90\% &   \textbf{13.1} & 2364\\ 
\bottomrule
\end{tabular}
\end{small}
\end{center}
\end{table}

\begin{figure}[t]
\centering
\subfloat[ZO-BCD-R: ``barbershop'' $\rightarrow$ ``flagpole'']{\includegraphics[width=.47\linewidth]{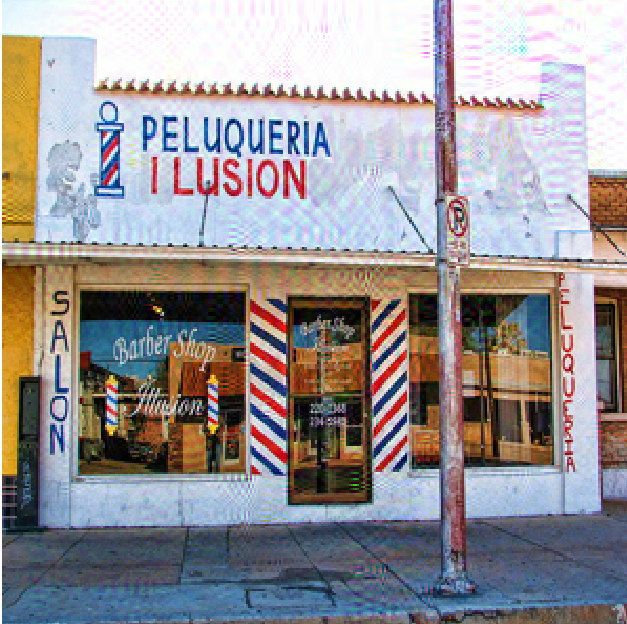}} 
\hfill
\subfloat[ZO-BCD-R (large coeff.): ``barbershop'' $\rightarrow$ ``flagpole'']{\includegraphics[width=.47\linewidth]{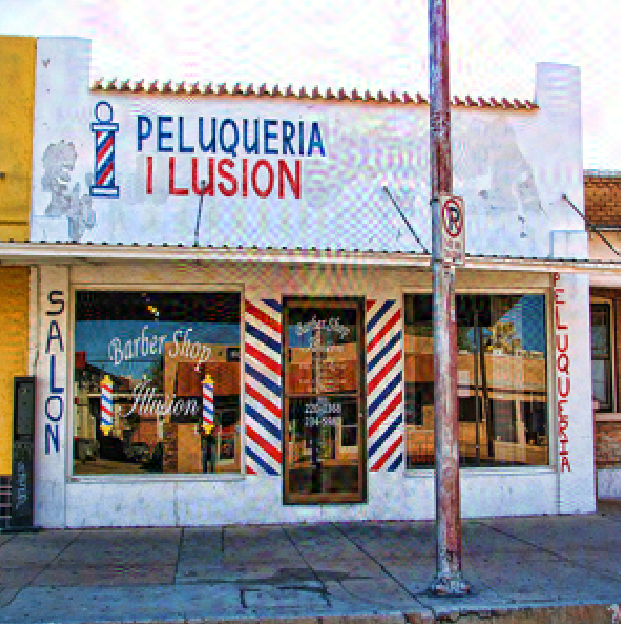}}
\\
\subfloat[ZO-BCD-R: ``dumbbell'' $\rightarrow$ ``computer keyboard'']{\includegraphics[width=.47\linewidth]{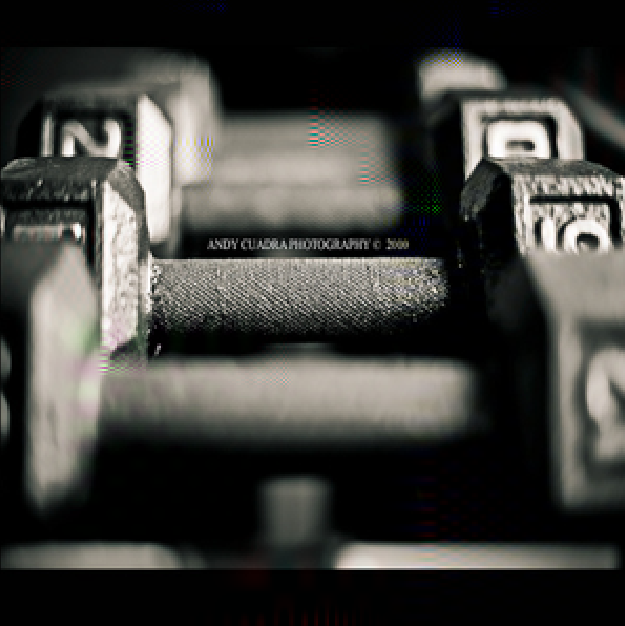}}
\hfill
\subfloat[ZO-BCD-R (large coeff.): ``dumbbell'' $\rightarrow$ ``computer keyboard'']{\includegraphics[width=.47\linewidth]{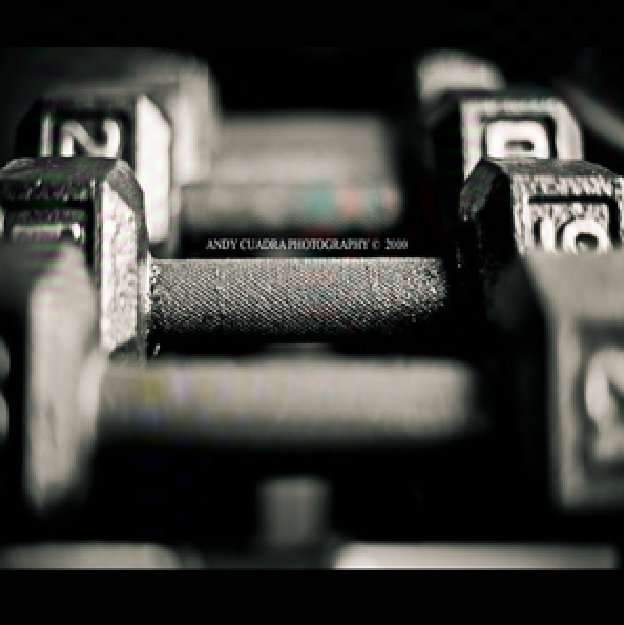}}
\vspace{-0.05in}
\caption{Examples of wavelet attacked images by ZO-BCD-R and ZO-BCD-R (attack restricted to only large coefficients, \textit{i.e.} $\textrm{abs}\geq 0.05$), true labels and mis-classified labels.}
\label{fig:AttackedImages}
\end{figure}

We consider a wavelet domain, untargeted, per-image attack on the \texttt{ImageNet} dataset \citep{deng2009imagenet} with the pre-trained \texttt{Inception-v3} model \citep{szegedy2016rethinking}, as discussed in Section~\ref{sec:sparse_attack}. We use the famous `db45' wavelet \citep{daubechies1992ten} with $3$-level DWT in these attacks.
Empirical performance was evaluated by attacking $1000$ randomly selected \texttt{ImageNet} pictures that were initially classified correctly. In addition to full wavelet domain attacks using both ZO-BCD-R and ZO-BCD-RC, we experiment with only attacking large wavelet coefficients, \textit{i.e.} the important components of the images in terms of the wavelet basis. If we only attack wavelet coefficients greater than $0.05$ in magnitude, the problem dimension is reduced by an average of $67.3\%$ for the tested images; nevertheless, the attack problem dimension is still as large as $\sim90,000$, so ZO-BCD is still suitable for this attack problem.

 The test results are summarized in Table~\ref{tab:attack_result}. All three versions of ZO-BCD wavelet attack beat the other state-of-the-art methods in both attack success rate and $\ell_2$ distortion, and the large-coefficients-only (\textit{i.e.} we only attack wavelet coefficients with $\mathrm{abs}\geq0.05$) wavelet attack by ZO-BCD-R achieves the best results. Furthermore, ZO-BCD is robust to the choice of the number of blocks and sparsity, as summarized in Table~\ref{tab:attack_robustness}. We present a few visual examples of these adversarial attacks in Figure~\ref{fig:AttackedImages}. More examples, and detailed experimental settings, can be found in Appendix~\ref{sec:exp setup detail}.

\subsection{Sparse CWT attacks on audio signals}
\label{sec:AudioAttack}



\begin{figure}[h]
\vspace{0.15in}
\centering
\subfloat[Attack success rate.]{
\includegraphics[width = 0.98\linewidth]{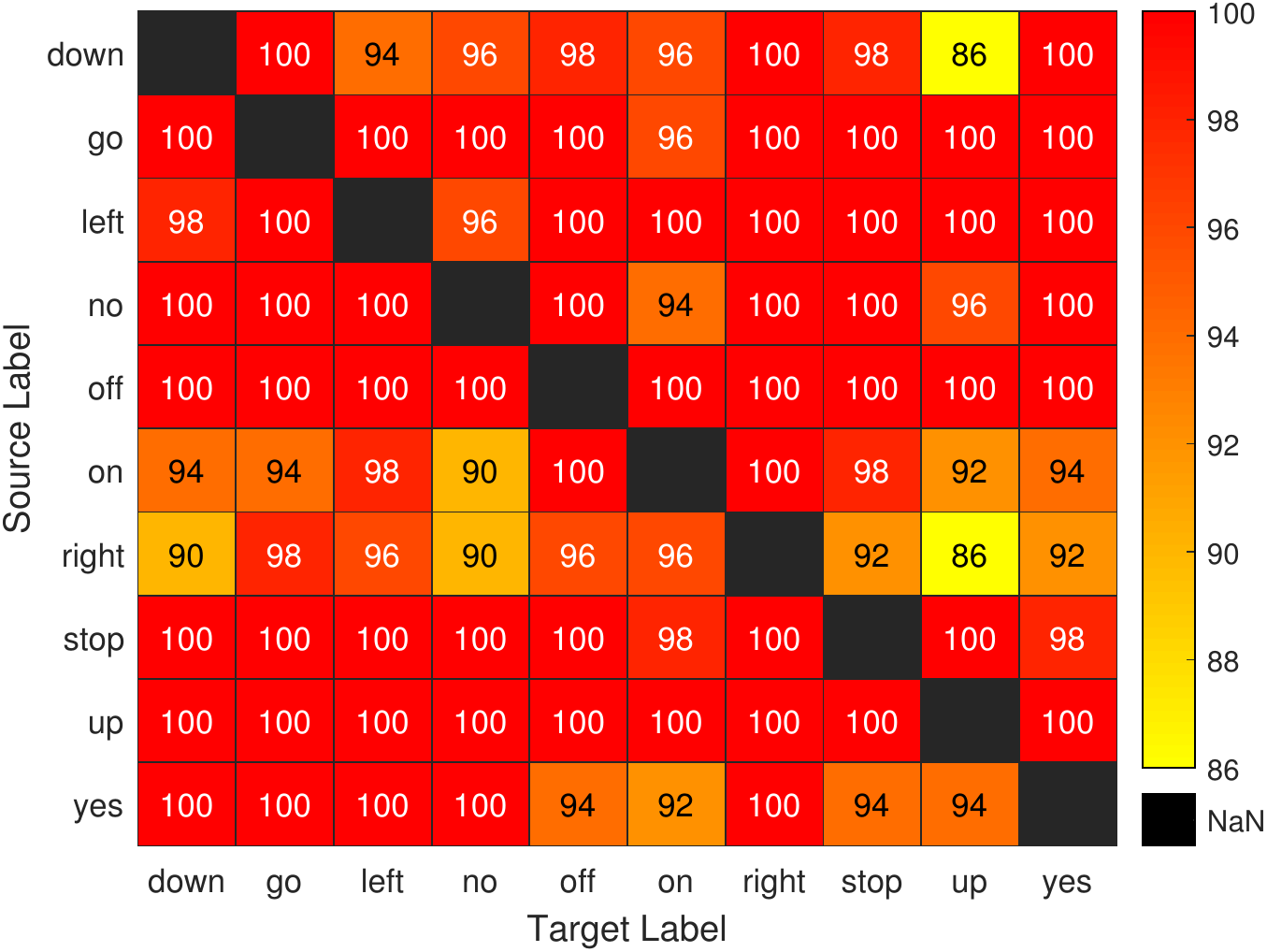} \label{fig:AudioASR}}
\\
\vspace{0.05in}
\subfloat[Relative loudness, in decibels.]{
\includegraphics[width = 0.98\linewidth]{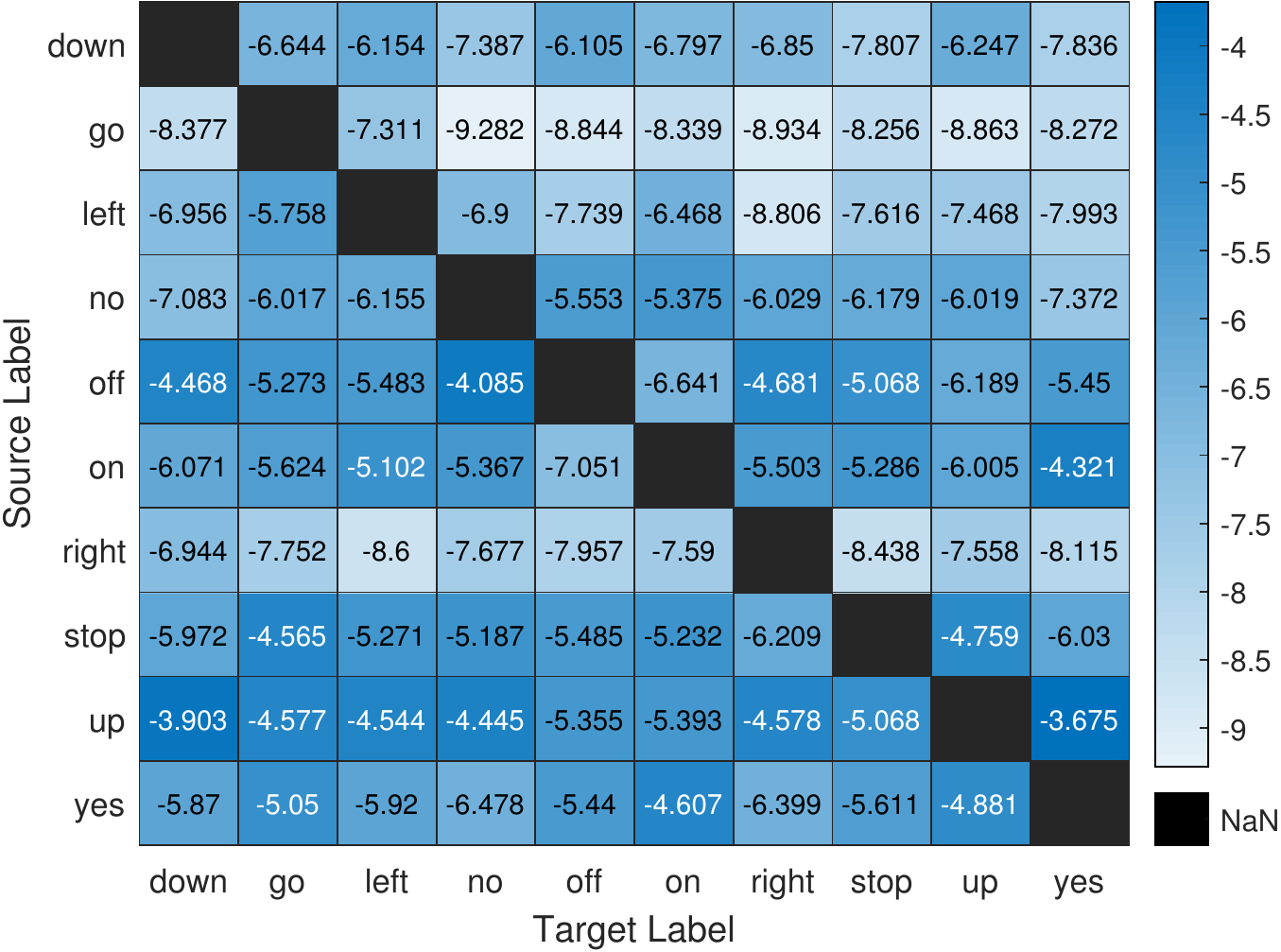} \label{fig:AudiodB}}
\caption{Detailed results for \textit{targeted} sparse wavelet attacks on audio signals.} \label{fig:AudioASR_and_dB}
\end{figure}

\begin{table}[t]
\caption{Results for \textit{untargeted} attacks on audio signals using ZO-BCD-R in the time domain, in the wavelet domain using a step-size of $0.02$ and in the wavelet domain using a step-size of $0.05$. Attack success rate (ASR), average final Decibel distortion and average number of queries to 1st successful attack.}
\label{tab:untargeted_audio}
\vspace{0.1in}
\begin{center}
\begin{small}
\begin{tabular}{lrrc}
\toprule
\textsc{Domains} & \textsc{ASR}  & d\textsc{B dist} & \textsc{Queries}\\
\midrule
Time    & $\textbf{100}\%$  & +1.5597 & $\textbf{894}$ \\
Wavelet (0.02)  & 99.9$\%$ & -$\textbf{13.8939}$ & 3452 \\
Wavelet (0.05)  & $\textbf{100}\%$ & -7.1192 & 2502 \\
\bottomrule
\end{tabular}
\end{small}
\end{center}
\end{table}

\begin{table}[t]
\caption{Results of attacks on {\tt SpeechCommands} dataset. A = \citep{alzantot2018did}, V\&S = \citep{vadillo2019universal}, Li = \citep{li2020advpulse}, Xie = \citep{xie2020enabling}. Univ. = Universal.} \label{tab:attack_result_audio}
\vspace{0.1in}
\begin{center}
\begin{small}
\begin{tabular}{lcccc}
\toprule
\textsc{Method} & \textsc{ASR} & \textsc{Univ.} & \textsc{Black-box} & \textsc{Targeted} \\
\midrule
A. & $89.0\%$ & \textsc{No} & \textsc{Yes} & \textsc{Yes} \\
V\&S & $70.4\%$ & \textsc{Yes} & \textsc{No} & \textsc{No} \\
Li & $96.8\%$ & \textsc{Yes} & \textsc{No}  & \textsc{Yes} \\
Xie & $97.8\%$ & \textsc{No} & \textsc{No}  & \textsc{Yes} \\ 
ZO-BCD  & $\textbf{97.9}\%$ & \textsc{No} & \textsc{Yes} & \textsc{Yes} \\
\bottomrule
\end{tabular}
\end{small}
\end{center}
\end{table}


We consider targeted per-clip audio adversarial attacks on the \texttt{SpeechCommands} dataset \citep{warden2018speech}, which consists of $1$-second audio clips, each containing a one-word voice command, \textit{e.g.} ``yes'' or ``left''. The audio sampling rate is $16$kHz thus each clip is a $16,000$ real valued vector. Adversarial attacks against this data set have been considered in \citep{alzantot2018did,vadillo2019universal,li2020advpulse} and \citep{xie2020enabling}, although with the exception of \citep{alzantot2018did} all these works consider a white-box setting\footnote{There are other subtle differences in the threat models considered in these works, as compared to ours (see Appendix~\ref{appendix:Audio}).}.
The victim model is a pre-trained, 5 layer, convolutional network called \texttt{commandNet} \citep{commandNet}. The architecture is essentially as proposed in \citep{sainath2015convolutional}. It takes as input the bark spectrum coefficients of a given audio clip, a transform closely related to the Mel Frequency transform. The test classification accuracy of this model (on un-attacked audio clips) is $94.46\%$. We use the Morse \citep{olhede2002generalized} continuous wavelet transform with $111$ frequencies, resulting in a problem dimension of $111\times 16,000 = 1,776,000$. As discussed in \citep{carlini2018audio}, the appropriate measure of size for the attacking distortion $\delta$ is relative loudness:
\begin{equation*}
    \mathrm{dB}_{x}(\delta) := 20\left(\max_{i}\log_{10}(|x_i|) - \max_{i}\log_{10}(|\delta_i|) \right).
\end{equation*}
The results are detailed in Table~\ref{tab:attack_result_audio} and Figure~\ref{fig:AudioASR_and_dB}. Overall, we achieve a $97.93\%$ ASR using a mean of $7073$ queries. Our attacking distortions have a mean volume of $-6.32 \mathrm{dB}$. As can be seen, our proposed attack exceeds the state of the art in attack success rate (ASR), surpassing even white-box attacks! This is not to claim that our proposed method is strictly better than others, as there are multiple factors to consider when judging the ``goodness'' of an attack (ASR, attack distortion, universality etc.), see Appendix~\ref{appendix:Audio}. The attacking noise can be heard as a slight ``hiss'', or white noise in the attacked audio clips. The original keyword however is easy for a human listener to make out. We encourage the reader to listen to a few examples, available at \url{https://github.com/YuchenLou/ZO-BCD}. 

Out of curiosity, we also tested using ZO-BCD to craft untargeted adversarial attacks in the time domain ({\em i.e.} without using a wavelet transform) for 1000 randomly selected audio clips. The results are underwhelming; indeed the attacking perturbation is on average significantly {\em louder} than the victim audio clip (see Table~\ref{tab:untargeted_audio})! This suggests attacking in a wavelet domain is much more effective than attacking in the original signal domain.

\section{Conclusion}
We have introduced ZO-BCD, a novel zeroth-order optimization algorithm. ZO-BCD enjoys strong, albeit probabilistic, convergence guarantees. We have also introduced a new paradigm in adversarial attacks on classifiers: the sparse wavelet domain attack. On medium-scale test problems the performance of ZO-BCD matches or exceeds that of state-of-the-art zeroth order optimization algorithms, as predicted by theory. However, the low per-iteration computational and memory requirements of ZO-BCD means that it can tackle huge-scale problems that for other zeroth-order algorithms are intractable. We demonstrate this by successfully using ZO-BCD to craft adversarial examples, to both image and audio classifiers, in wavelet domains where the problem size can exceed 1.7 million.

\section*{Acknowledgements}
We thank Zhenliang Zhang for valuable discussions on the related work. We also thank the anonymous reviewers for their helpful feedback.

\bibliographystyle{icml2021}
\bibliography{BlockZOROBib}

\newpage

\appendix

\section{Proofs for Section~\ref{sec:grad estimator}}

\begin{proof}[Proof of Theorem~\ref{thm:Grad_Estimate_Error}]
Consider the function $f^{(j)}(t) = f(x+ U^{(j)}t)$. This function has gradient $g^{(j)}$ and Hessian $\nabla^{2}_{jj}f$. By Assumption~\ref{assumption:Sparsity}  $\|g^{(j)}\|_{0} \leq \|g\|_{0} \leq s$ while $\|\nabla^{2}_{jj}f\|_{1} \leq H$  from Assumption~\ref{assumption:WeakSparsity}. Thus, $f^{(j)}(t)$ satisfies the assumptions of Corollary~2.7 of \citep{cai2020zeroth} so Theorem~\ref{thm:Grad_Estimate_Error} follows from this result.
\end{proof}

\begin{proof}[Proof of Theorem~\ref{thm:equalsparsity}]
For notational convenience, for this proof only we let $s:= s_{\mathrm{exact}}$. Let $g_{i_1},\ldots, g_{i_k},\ldots, g_{i_s}$ denote the non-zero entries of $g := g(x)$, after the permutation has been applied. Let $Y_{j}$ be the random variable counting the number of $g_{i_k}$ within block $j$:
\begin{equation*}
Y_{j} = \#\{i_k: \ i_k \textnormal{ in block } j\}.
\end{equation*}
Thus, the random vector $\mathbf{Y} = (Y_1,\ldots, Y_{J}) \in\mathbb{R}^{J}$ obeys the multinomial distribution. Observe $\sum_{j=1}^{J}Y_{j} = s$ and, because the blocks are equally sized, $\mathbb{E}(Y_j) = \frac{s}{J}$.

By Chernoff's bound, for $\Delta>0$ and each $Y_{j}$ individually:
\begin{equation*}
\mathbb{P}\left[\left| Y_{j} - \frac{s}{J}\right| \geq \Delta\frac{s}{J}\right] \leq 2e^{-\frac{\Delta^2\mathbb{E}(Y_j)}{3}}=2e^{-\frac{\Delta^2s}{3J}}.
\end{equation*}
Applying the union bound:
\begin{align*}
& \quad~\mathbb{P}\left[ \exists \ j \textnormal{ s.t. } \left| Y_{j} - \frac{s}{J}\right| \geq \Delta\frac{s}{J}\right] \\
& \leq \sum_{j=1}^{J}\mathbb{P}\left[ \left| Y_{j} - \frac{s}{J}\right| \geq \Delta\frac{s}{J} \right] \leq 2Je^{-\frac{\Delta^2s}{3J}},
\label{eq:Exists_forAll}
\end{align*}
and so:
\begin{align*}
&\quad~\mathbb{P}\left[ |Y_{j} - \frac{s}{J}| \leq \Delta\frac{s}{J},\,\, \forall j\in [1,\cdots,J]\right]\\
&= 1 - \mathbb{P}\left[ \exists \ j \textnormal{ s.t. } \left| Y_{j} - \frac{s}{J}\right| \geq \Delta\frac{s}{J}\right] \\
& \geq 1 - 2Je^{-\frac{\Delta^2s}{3J}}.
\end{align*}
This finishes the proof.
\end{proof}

\begin{proof}[Proof of Corollary~\ref{cor:block error bound}]
From Theorem~\ref{thm:equalsparsity} we have that, with probability at least $1- 2J\exp(\frac{-0.01s_{\mathrm{exact}}}{3J})$, $\|g^{(j)}\|_{0} \leq 1.1s_{\mathrm{exact}}/J \leq s$. Assuming this is true, \eqref{eq:ErrorBound2} holds with probability $1 - (s/d)^{b_2 s}$. These events are independent, thus the probability that they both occur is:
\begin{align*}
 & \left(1- 2J\exp(\frac{-0.01s_{\mathrm{exact}}}{3J})\right)\left(1 - (s/d)^{b_2 s}\right)
\end{align*}
Because $s \ll d$ the term exponential in $s_{\mathrm{exact}}/J$ is significantly larger than $(s/d)^{b_2s}$. Expanding, and keeping only dominant terms we see that this probability is equal to $1 - \mathcal{O}\left(J\exp(\frac{-0.01s_{\mathrm{exact}}}{3J})\right)$.
\end{proof}

We emphasize that Corollary~\ref{cor:block error bound} holds for all $j$. Before proceeding, we remind the reader that for fixed $s$ the {\em restricted isometry constant} of $Z \in \mathbb{R}^{m\times n}$ is defined as the smallest $\delta > 0$ such that:
\begin{equation*}
    (1-\delta)\|v\|_{2}^{2} \leq \|Zv\|_{2}^{2} \leq (1+\delta)\|v\|_{2}^{2}
\end{equation*}
for all $v\in\mathbb{R}^{n}$ satisfying  $\|v\|_{0} \leq s$ .The key ingredient to the proof of Theorem~\ref{cor:Circulant} is the following result:

\begin{theorem}[{\citep[Theorem 1.1]{krahmer2014suprema}}]
\label{thm:kramer}
Let $z\in\mathbb{R}^{n}$ be a Rademacher random vector and choose a random subset $\Omega = \{j_1,\ldots, j_m\} \subset \{1,\ldots, n\}$ of cardinality $m = c\delta^{-2}s\log^2(s)\log^{2}(n)$. Let $Z \in \mathbb{R}^{m\times n}$ denote the matrix with rows $\frac{1}{\sqrt{m}}\mathcal{C}_{j_i}(z)$. Then $\delta_{s}(Z) < \delta$ with probability $1 - n^{-\log(n)\log^{2}(s)}$. 
\end{theorem}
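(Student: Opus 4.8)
The plan is to decouple the argument into two independent random events — the near-equidistribution of the gradient support among blocks, and the restricted isometry property (RIP) of the partial circulant measurement matrix — and to observe that, once both hold, the bound \eqref{eq:ErrorBound2} follows from the very same deterministic CoSaMP analysis that proved Theorem~\ref{thm:Grad_Estimate_Error}, the \emph{only} change being the mechanism by which the RIP is guaranteed.

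First I would dispose of the block-sparsity exactly as in the proof of Corollary~\ref{cor:block error bound}: applying Theorem~\ref{thm:equalsparsity} with $\Delta = 0.1$, one obtains $\|g^{(j)}\|_0 \leq 1.1 s_{\mathrm{exact}}/J \leq s$ simultaneously for all $j$, on an event $\mathcal{E}_{\mathrm{blk}}$ of probability at least $1 - 2J\exp(-0.01 s_{\mathrm{exact}}/(3J))$. This accounts for the first subtracted term in the stated probability. Next I would invoke Theorem~\ref{thm:kramer} to control the measurement matrix $Z$ whose rows are $\tfrac{1}{\sqrt{m}}\mathcal{C}_{j_i}(z)$. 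The CoSaMP recovery guarantee underlying \eqref{eq:ErrorBound2} requires a bound $\delta_{4s}(Z) < \delta_0$ on the order-$4s$ restricted isometry constant for some fixed threshold $\delta_0$ (this $\delta_0$ is precisely what pins down the constants $\rho$ and $\tau$). Applying Theorem~\ref{thm:kramer} with ambient dimension $n = d/J$, sparsity order $4s$, and isometry target $\delta_0$, one needs $m \geq c\delta_0^{-2}(4s)\log^2(4s)\log^2(d/J)$ rows; absorbing the factor $4$, the $\delta_0^{-2}$, and the ratio $\log^2(4s)/\log^2(s)$ into the constant $b_3$, the prescribed $m = b_3 s\log^2(s)\log^2(d/J)$ suffices, so the RIP event $\mathcal{E}_{\mathrm{RIP}}$ holds with probability at least $1 - (d/J)^{-\log(d/J)\log^2(4s)}$, the second subtracted term.

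Finally I would argue that on $\mathcal{E}_{\mathrm{blk}} \cap \mathcal{E}_{\mathrm{RIP}}$ the bound \eqref{eq:ErrorBound2} holds deterministically, so that a union bound over the two complements yields exactly the claimed probability. The key point — and the step I expect to be the main obstacle — is verifying that the gradient-estimation error analysis of \citep[Cor.~2.7]{cai2020zeroth} depends on the sampling directions \emph{only} through the RIP constant of $Z$ and the per-measurement noise magnitude, and never uses independence of the rows. Two things must be checked: (i) the CoSaMP contraction factor $\rho$ and the stability constant $\tau$ are functions of $\delta_0$ alone, hence are unchanged when the subgaussian ensemble is swapped for the circulant one; and (ii) the finite-difference noise vector $e := \by - Zg^{(j)}$ admits the same $\ell_2$ bound. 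For (ii) I would note that each $z_i = \mathcal{C}_{j_i}(z)$ is a cyclic shift of a single Rademacher vector and therefore still has $\pm 1$ entries; consequently the entrywise Taylor-remainder estimate supplied by Assumption~\ref{assumption:WeakSparsity}, together with the oracle bound $|\xi| \leq \sigma$ and the choice $\delta = 2\sqrt{\sigma/H}$, reproduces the $2\tau\sqrt{\sigma H}$ term verbatim, with no appeal to the joint law of the $z_i$.

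Granting (i) and (ii), the entire chain of reasoning in Theorem~\ref{thm:Grad_Estimate_Error} transfers unchanged once $Z$ is known to satisfy $\delta_{4s}(Z) < \delta_0$, and the combination of $\mathcal{E}_{\mathrm{blk}}$ (ensuring the effective sparsity is at most $s$) and $\mathcal{E}_{\mathrm{RIP}}$ (ensuring accurate recovery) gives \eqref{eq:ErrorBound2}. Since these two events concern disjoint sources of randomness — the block permutation $U$ versus the generating vector $z$ and the index set $\Omega$ — their failure probabilities simply add, producing the stated lower bound. I would expect essentially all of the genuine content to reside in the careful bookkeeping of step (ii): confirming that replacing i.i.d. rows by correlated circulant shifts leaves the measurement-noise bound, and not merely the recovery guarantee, intact.
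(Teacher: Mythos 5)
Your proposal does not prove the statement in question; it proves a different one. The statement to be established is Theorem~\ref{thm:kramer} itself --- the restricted isometry property of partial random circulant matrices --- which the paper does not prove but imports verbatim from \citep[Theorem~1.1]{krahmer2014suprema}. What you have written is instead a proof of the downstream result, Theorem~\ref{cor:Circulant}: you split into the block-equidistribution event (via Theorem~\ref{thm:equalsparsity} with $\Delta = 0.1$) and the RIP event, and for the latter you \emph{invoke} Theorem~\ref{thm:kramer} as a black box. Relative to the assigned statement this is circular: the entire probabilistic content --- that $m = c\delta^{-2}s\log^2(s)\log^2(n)$ rows of a circulant matrix generated by a single Rademacher vector give $\delta_s(Z) < \delta$ with failure probability $n^{-\log(n)\log^2(s)}$ --- is assumed, not derived. (As a proof of Theorem~\ref{cor:Circulant} your outline does essentially match the paper's: the same two independent events, the same observation that CoSaMP's constants $\rho$ and $\tau$ depend on the sampling directions only through the RIP constant of order $4s$, and the same remark that the finite-difference noise bound uses only the $\pm 1$ entries of each $z_i$, so \eqref{eq:ErrorBound2} transfers as in Corollary~\ref{cor:block error bound}. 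But that is not the theorem you were asked to prove.)

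The missing piece is precisely the hard part, and nothing in your proposal substitutes for it. The rows $\frac{1}{\sqrt{m}}\mathcal{C}_{j_i}(z)$ are \emph{not} independent: all $m$ of them are cyclic shifts of one Rademacher vector, so only $n$ random signs underlie the whole matrix, and the standard RIP argument for subgaussian ensembles (row-wise concentration plus a union bound over a net of sparse vectors) breaks down. The actual proof in \citep{krahmer2014suprema} exploits the identification of circulant multiplication with convolution to rewrite $\|Zx\|_2^2$ as a second-order Rademacher chaos $z^{\top}V_x^{\top}V_x z$ with $V_x$ depending linearly on $x$; the restricted isometry constant is then the supremum of this chaos process over the set of $s$-sparse unit vectors, which is controlled by their main chaos-process bound in terms of Talagrand's $\gamma_2$-functional together with the Frobenius and spectral diameters of the index set. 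The covering-number estimates for that set are what produce the extra logarithmic factors $\log^2(s)\log^2(n)$ (compared with $m \sim \delta^{-2}s\log(n/s)$ for i.i.d.\ ensembles) and the quasi-polynomial failure probability. None of this machinery --- the chaos reformulation, the generic-chaining bound, the covering estimates --- appears in your proposal, so the assigned statement remains unproven; your ``two things to check'' (i) and (ii) are checks about how the theorem is \emph{used}, not about why it is \emph{true}.
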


$c$ is a universal constant, independent of $s,n$ and $\delta$. Similar results may be found in \citep{mendelson2018improved} and \citep{huang2018improved}. In \citep{krahmer2014suprema} a more general version of this theorem is provided, which allows the entries of $z$ to be drawn from any sub-Gaussian distribution.  
\begin{proof}[Proof of Theorem~\ref{cor:Circulant}]
Let $Z$ be the sensing matrix with rows $\frac{1}{\sqrt{m}}\mathcal{C}_{j_i}(z)$.
By appealing to Theorem~\ref{thm:kramer} with $s = 4s$, $n = d/J$ and $\delta = 0.3843$ we get $\delta_{4s}(Z) \leq 0.3843$, with probability $1 - \left(d/J\right)^{\log(d/J)\log^2(4s)}$
From Theorem~\ref{thm:equalsparsity} we have that, with probability at least $1- 2J\exp(\frac{-0.01s_{\mathrm{exact}}}{3J})$, $\|g^{(j)}\|_{0} \leq 1.1s_{\mathrm{exact}}/J \leq s$. Assuming $\delta_{4s}(Z) \leq 0.3843$, Theorem~\ref{cor:Circulant} follows by the same proof as in \citep[Corollary~2.7]{cai2020zeroth}. The events $\|g^{(j)}\|_{0} \leq s/J$ and $\delta_{4s}(Z) \leq 0.3843$ are independent, hence they both occur with probability:
\begin{align} \label{eq:Universal_Prob_Bound}
 & \Big(1- 2J\exp\left(\frac{-0.01s_{\mathrm{exact}}}{3J}\right)\Big)\left(1 -\left(d/J\right)^{\log(d/J)\log^2(4s)}\right) \cr
 & \geq 1 - 2J\exp\left(\frac{-0.01s_{\mathrm{exact}}}{3J}\right) - \left(d/J\right)^{\log(d/J)\log^2(4s)}. 
\end{align}
Note that the third term in the right side of \eqref{eq:Universal_Prob_Bound} is {\em universal}, {\em i.e.} it holds for all $x \in \mathbb{R}^{d}$, as it depends only on the choice of $Z$, not the choice of $x$. 
\end{proof}




\section{ZO-BCD for unequally-sized blocks}
\label{sec:UnequalBlocks}

Using randomly assigned, equally-sized blocks is an important part of the ZO-BCD framework as it allows one to consider a block sparsity $\approx s/J$, instead of the worst case sparsity of $s$. Nevertheless, there may be situations where it is preferable to use user-defined, unequally-sized blocks. For such cases we recommend the following (we discuss the modifications here for ZO-BCD-R, but with obvious changes it also applies to ZO-BCD-RC). Let $s^{(j)} \leq s$ be an upper estimate of the sparsity of the $j$-th block gradient: $\|g^{(j)}(x)\|_{0} \leq s^{(j)}$. Let $m^{(j)} = b_1s^{(j)}\log(D/J)$ (and use the analogous formula for ZO-BCD-RC) and define $m^{\max} = \max_{j} m^{(j)}$. When initializing ZO-BCD-R, generate $m^{\max}$ Rademacher random variables: $z_{1},\ldots, z_{m^{\max}} \in \mathbb{R}^{d^{\max}}$. At each iteration, if block $j$ is selected, randomly select $i_{1},\ldots, i_{m^{(j)}}$ from $1,\ldots, m^{\max}$ and for $k=1,\ldots, m^{(j)}$ let $\tilde{z}_{i_{k}} \in \mathbb{R}^{d^{(j)}}$ denote the vector formed by taking the first $d^{(j)}$ components of $z_{i_k}$. Use $\{\tilde{z}_{i_{k}}\}_{k=1}^{m^{(j)}}$ as the input to Algorithm~\ref{algo:grad estimate}. Note that the $\{\tilde{z}_{i_{k}}\}_{k=1}^{m^{(j)}}$ possess the same statistical properties as the $\{z_{i_k}\}_{k=1}^{m^{(j)}}$ ({\em i.e.} they are i.i.d. Rademacher vectors) so using them as sampling directions will result in the same bound on $\|g^{(j)}_k - \hat{g}^{(j)}_k\|_2$ as Corollary~\ref{cor:block error bound}.

\section{Proofs for Section~\ref{sec:zo-bcd}}

Our proof utilizes the main result of \citep{tappenden2016inexact}. This paper requires $\alpha_{k} = \frac{1}{L_{j_k}}$, {\em i.e.} the step size at the $k$-th iteration is inversely proportional to the block Lipschitz constant. This is certainly ideal, but impractical. In particular, if the blocks are randomly selected it seems implausible that one would have good estimates of the $L_{j}$. Of course, since $L_{j} \leq L_{\max}$ we observe that $L_{\max}$ is {\em a} Lipschitz constant for every block, and thus we may indeed take $\alpha_{k} = \alpha = \frac{1}{L_{\max}}$. This results in a slightly slower convergence resulted, reflected in a factor of $L_{\max}$ in Theorem~\ref{thmNonReg}. Throughout this section we shall assume $f$ satisfies Assumptions~\ref{assumption:Lipschitz_Diff}--\ref{assumption:WeakSparsity}. For all $j=1,\ldots,J$ define:
\begin{align*}
    V_j(x,t) = \langle g^{(j)}(x),t \rangle+\frac{L_{\max}}{2}\|t\|^2_{2},
\end{align*}
so that, by Lipschitz differentiablity:
\begin{equation*}
    f(x_{k}+U^{(j)}t) \leq f(x_{k}) + V_j(x_{k},t).
\end{equation*}
Define $t^{*}_{k,j} := \argmin V_j(x_{k},t) := -\frac{1}{L_{\max}}g^{(j)}_k$ while let $t^{'}_{k,j}$ be the update step taken by ZO-BCD, {\em i.e.} $t^{'}_{k,j}= \frac{1}{L_{\max}}\hat{g}^{(j)}_k$. 

\begin{lemma}
\label{lemma:block lipschitz}
Suppose $f$ satisfies Assumptions~\ref{assumption:Lipschitz_Diff}--\ref{assumption:BoundedLevelSets}. Then $f(x)-f^*\geq \frac{1}{2L_{\max}}\| g^{(j)}(x) \|_2^2$ for any $x\in\mathbb{R}^{d}$ and any $j = 1,\ldots, J$.
\end{lemma}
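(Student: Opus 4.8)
The plan is to exploit the quadratic descent inequality recorded immediately above the lemma statement, namely $f(x + U^{(j)}t) \le f(x) + V_j(x,t)$, together with the fact that $f^*$ is a global lower bound on $f$. Block Lipschitz differentiability (Assumption~\ref{assumption:Lipschitz_Diff}) is precisely what furnishes this inequality: restricting $f$ to the $j$-th block via $f^{(j)}(t) = f(x + U^{(j)}t)$ produces a function whose gradient is $L_{\max}$-Lipschitz (since $L_j \le L_{\max}$ for every $j$), so the standard descent lemma yields the displayed quadratic upper bound, with $\nabla f^{(j)}(0) = g^{(j)}(x)$ playing the role of the block gradient.

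The core step is to minimize the right-hand side over the free variable $t \in \mathbb{R}^{d^{(j)}}$. The quadratic $V_j(x,t) = \langle g^{(j)}(x), t\rangle + \frac{L_{\max}}{2}\|t\|_2^2$ is minimized at $t^* = -\frac{1}{L_{\max}}g^{(j)}(x)$ (exactly the $t^*_{k,j}$ introduced above), where it attains the value $-\frac{1}{2L_{\max}}\|g^{(j)}(x)\|_2^2$. Substituting this choice of $t$ into the descent inequality gives $f(x + U^{(j)}t^*) \le f(x) - \frac{1}{2L_{\max}}\|g^{(j)}(x)\|_2^2$.

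Finally, because Assumption~\ref{assumption:BoundedLevelSets} guarantees that $\mathcal{X}^*$ is non-empty, $f^* = \min f$ is attained and lower-bounds the value of $f$ at every point; in particular $f^* \le f(x + U^{(j)}t^*)$. Chaining this with the previous inequality and rearranging yields $f(x) - f^* \ge \frac{1}{2L_{\max}}\|g^{(j)}(x)\|_2^2$, which is the claim.

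I expect no serious obstacle here: the argument is the standard gradient-dominance / Polyak--Lojasiewicz-type bound specialized to a single block, and it does not even invoke convexity (Assumption~\ref{assumption:Convexity}) beyond what is implicitly needed to ensure $f^*$ is a genuine global minimum. The only point requiring a moment's care is that the optimal step $t^*$ moves only within the $j$-th block, so the upper bound obtained is controlled by the block gradient $g^{(j)}(x)$ rather than the full gradient $g(x)$ — which is exactly the block-wise form we want, and which will be the quantity compared against the estimation error $\|\hat{g}^{(j)}_k - g^{(j)}_k\|_2$ in the convergence analysis of Theorem~\ref{thmNonReg}.
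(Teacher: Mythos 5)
Your proof is correct, and it establishes the same block-wise gradient-dominance inequality as the paper, but by a more self-contained route. The paper restricts $f$ to the block via $h_x(t) = f(x+U^{(j)}t)$, observes that $h_x$ is convex (Assumption~\ref{assumption:Convexity}) and $L_{\max}$-smooth with non-empty minimizer set (Assumption~\ref{assumption:BoundedLevelSets}), and then cites \citep[Proposition~B.3, part (c.ii)]{bertsekas1997nonlinear} for the inequality $h_x(t)-h_x^*\geq \frac{1}{2L_{\max}}\|\nabla h_x(t)\|_2^2$, which it evaluates at $t=0$ together with $h_x^*\geq f^*$. You instead prove that key inequality inline: apply the block descent lemma, substitute the minimizing step $t^*=-\frac{1}{L_{\max}}g^{(j)}(x)$ to get $f(x+U^{(j)}t^*)\leq f(x)-\frac{1}{2L_{\max}}\|g^{(j)}(x)\|_2^2$, and lower-bound the left-hand side by $f^*$. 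Your chain of inequalities is essentially the textbook proof of the proposition the paper cites, so the two arguments are equivalent in substance; what your version buys is economy of hypotheses, since it uses neither convexity of $f$ nor attainment of the block-restricted minimum (the paper invokes both), only that $f^*$ is a global lower bound. Note that both proofs share the same implicit step: since the lemma is stated for arbitrary $x\in\mathbb{R}^d$, one needs $f\geq f^*$ along the whole affine slice $x+\mathrm{range}(U^{(j)})$, not merely on $\mathcal{X}$, i.e., the analysis tacitly treats the problem as unconstrained — your proof is no worse than the paper's on this point.
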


\begin{proof}
Define $h_{x}:\mathbb{R}^{d^{(j)}}\to\mathbb{R}$ as $h_{x}(t) := f(x+U^{(j)}t)$ where $U^{(j)}$ is as described in Section~\ref{sec:Intro}. Since $U^{(j)}$ is a linear transformation and $f$ is convex, $h_{x}$ is also convex. By Assumption~\ref{assumption:Lipschitz_Diff} and $L_{j}\leq L_{\max}$, $h_{x}$ is $L_{\max}$-Lipschitz differentiable. From Assumption~\ref{assumption:BoundedLevelSets} it follows $\mathcal{Y}^{*} = \argmin_{t} h_{x}(t)$ is non-empty, and $h_{x}^{*} := \min_{t} h_{x}(t) \geq f^{*}$. Thus, from \citep[Proposition~B.3, part (c.ii)]{bertsekas1997nonlinear} , we have:
\begin{equation*}
h_{x}(t)-h_{x}^{*}\geq \frac{1}{2L_{\max}}\|\nabla h_{x}(t)\|_{2}^{2} = \frac{1}{2L_{\max}}\|g^{(j)}(x+U^{(j)}t)\|_{2}^{2}
\end{equation*}
for all $t$. Choose $t = 0$, and use $h_{x}(0) = f(x)$ and $f^{*} \leq h_{x}^{*}$ to obtain:
\begin{equation*}
f(x) - f^{*} \geq h_{x}(0) - h_{x}^{*} \geq  \frac{1}{2L_{\max}}\|g^{(j)}(x)\|_{2}^{2}.
\end{equation*}
This finishes the proof.
\end{proof}

\begin{lemma}
\label{lemma:EtaTheta}
Let $\eta = 2\rho^{2n}$ and $\theta = \frac{4\tau^{2}\sigma H}{L_{\max}}$. For each iteration of ZO-BCD
\begin{equation}
\label{eq:Inexactness}
 V_j(x_k,t')-V_j(x_k,t^*) \leq \eta(f(x_k)-f^*)+\theta.
\end{equation}
with probability $1 - \mathcal{O}\left(J\exp(-\frac{0.01s_{\mathrm{exact}}}{3J}\right)$ for ZO-BCD-R and with probability greater than
\begin{equation*}
    1 - 2J\exp\left(\frac{-0.01s_{\mathrm{exact}}}{3J}\right) - \left(d/J\right)^{\log(d/J)\log^2(4.4s/J)}
\end{equation*}
for ZO-BCD-RC.
\end{lemma}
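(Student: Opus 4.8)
The plan is to reduce the entire bound to the per-iteration gradient estimation error $\|\hat{g}^{(j)}_k - g^{(j)}_k\|_2$, for which Corollary~\ref{cor:block error bound} and Theorem~\ref{cor:Circulant} already supply sharp, high-probability control. The first step is a purely algebraic identity. Since $V_j(x_k,\cdot)$ is a strongly convex quadratic with Hessian $L_{\max}I$ and unique minimizer $t^*_{k,j}$, we have $V_j(x_k,t) - V_j(x_k,t^*_{k,j}) = \frac{L_{\max}}{2}\|t - t^*_{k,j}\|_2^2$ for every $t$. Taking $t = t'_{k,j}$ and using $t'_{k,j} - t^*_{k,j} = -\frac{1}{L_{\max}}(\hat{g}^{(j)}_k - g^{(j)}_k)$ yields the clean identity
$$
V_j(x_k,t'_{k,j}) - V_j(x_k,t^*_{k,j}) = \frac{1}{2L_{\max}}\|\hat{g}^{(j)}_k - g^{(j)}_k\|_2^2 .
$$

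Next I would bound the right-hand side using the estimator error bound \eqref{eq:ErrorBound2}. For ZO-BCD-R I invoke Corollary~\ref{cor:block error bound}, and for ZO-BCD-RC I invoke Theorem~\ref{cor:Circulant}, each run with sparsity level $s_{\mathrm{block}}$; both give $\|\hat{g}^{(j)}_k - g^{(j)}_k\|_2 \leq \rho^n\|g^{(j)}_k\|_2 + 2\tau\sqrt{\sigma H}$ with the respective stated probabilities. Squaring and applying $(a+b)^2 \leq 2a^2 + 2b^2$ produces $\|\hat{g}^{(j)}_k - g^{(j)}_k\|_2^2 \leq 2\rho^{2n}\|g^{(j)}_k\|_2^2 + 8\tau^2\sigma H$, so that the suboptimality is at most $\frac{\rho^{2n}}{L_{\max}}\|g^{(j)}_k\|_2^2 + \frac{4\tau^2\sigma H}{L_{\max}}$.

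The final step converts the residual $\|g^{(j)}_k\|_2^2$ into the objective gap through the Polyak--\L{}ojasiewicz-type inequality of Lemma~\ref{lemma:block lipschitz}, namely $\|g^{(j)}(x_k)\|_2^2 \leq 2L_{\max}(f(x_k) - f^*)$. Substituting this gives exactly
$$
V_j(x_k,t'_{k,j}) - V_j(x_k,t^*_{k,j}) \leq 2\rho^{2n}\bigl(f(x_k)-f^*\bigr) + \frac{4\tau^2\sigma H}{L_{\max}} = \eta\bigl(f(x_k)-f^*\bigr) + \theta ,
$$
which is the claimed bound \eqref{eq:Inexactness}. The two probability estimates are then inherited verbatim from Corollary~\ref{cor:block error bound} and Theorem~\ref{cor:Circulant}, so no additional union bound is required at this single-iteration stage.

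I do not expect a serious obstacle here, as the argument is a short chain of substitutions. The only points demanding care are bookkeeping: tracking the constant $\frac{1}{2L_{\max}}$ through the quadratic identity and the factor of $2$ introduced by the Young-type inequality, and citing the correct error bound for each variant, since ZO-BCD-R and ZO-BCD-RC carry genuinely different failure probabilities. The one conceptually important remark is that the bound for ZO-BCD-RC is \emph{universal} in $x$; this is what will later permit a clean union bound over iterations in the proof of Theorem~\ref{thmNonReg}, though it plays no role in establishing this per-iteration lemma.
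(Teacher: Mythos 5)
Your proof is correct and follows essentially the same route as the paper's: the exact quadratic identity reducing the suboptimality to $\frac{1}{2L_{\max}}\|\hat{g}^{(j)}_k - g^{(j)}_k\|_2^2$, the squared estimator error bound inherited from Corollary~\ref{cor:block error bound} (ZO-BCD-R) and Theorem~\ref{cor:Circulant} (ZO-BCD-RC), and the conversion of $\|g^{(j)}_k\|_2^2$ into the objective gap via Lemma~\ref{lemma:block lipschitz}. Your bookkeeping is in fact slightly cleaner than the paper's: its intermediate inequality states $\|e^{(j)}_k\|_2^2 \leq 2\rho^{2n}\|g^{(j)}_k\|_2^2 + 4\tau^2\sigma H$, whereas Young's inequality gives $8\tau^2\sigma H$ (your value), which is exactly the constant required to land on the stated $\theta = \frac{4\tau^2\sigma H}{L_{\max}}$.
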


\begin{proof}
For notational convenience, we define $t^{*} := t^{*}_{k,j}$, $t^{'} := t^{'}_{k,j}$ and $e_k^{(j)} := \hat{g}_k^{(j)}-g_k^{(j)}$. By definition:
\begin{equation*}
    t'_{k,j} = -\frac{1}{L_{\max}} \hat{g}_k^{(j)} = -\frac{1}{L_{\max}}(g_k^{(j)}+e_k^{(j)}).
\end{equation*}
Moreover:
\begin{align*}
    V_j(x_k,t^*) & = -\frac{1}{2L_{\max}}\| g_k^{(j)} \|_2^2  \\
    V_j(x_k,t') & = -\frac{1}{2L_{\max}}\| g_k^{(j)} \|_2^2+\frac{1}{2L_{\max}}\|e_k^{(j)} \|_2^2 ,
\end{align*}
and hence:
\begin{equation}
    V_j(x_k,t') - V_j(x_k,t^*) \leq \frac{1}{2L_{\max}}\|e_k^{(j)} \|_2^2 . \label{eq:ErrorAnalysis1}
\end{equation}
Recall $\hat{g}^{(j)}_k$ is the output of GradientEstimation with gradient sparsity level $s_{\mathrm{block}} = 1.1s/J$. Using the error bound \eqref{eq:ErrorBound2} and replacing $s$ with $1.1s/J$:
\begin{align} \label{eq:ErrorAnalysis2}
  \|e_k^{(j)} \|_2^2 & \leq \left( \rho^n\| g_k^{(j)} \|_{2} + 2\tau\sqrt{\sigma H} \right)^2 \nonumber \\
    & \leq 2\rho^{2n}\| g_k^{(j)} \|_{2}^{2} + 4\tau^{2}\sigma H. 
\end{align}
with the stated probabilities, as justified by Corollary~\ref{cor:block error bound} (for ZO-BCD-R) and Theorem~\ref{cor:Circulant} (for ZO-BCD-RC) Finally, from Lemma \ref{lemma:block lipschitz} $\| g_k^{(j)} \|_{2}^{2} \leq 2L_{\max}\left(f(x_k) - f^{*}\right)$ for any $j=1,\cdots,J$. Connecting this estimate with \eqref{eq:ErrorAnalysis1} and \eqref{eq:ErrorAnalysis2} we obtain:
\begin{equation*}
    V_j(x_k,t') - V_j(x_k,t^*) \leq \underbrace{2\rho^{2n}}_{=\eta} \left(f(x_k) - f^{*}\right) + \underbrace{\frac{4\tau^{2}\sigma H}{L_{\max}}}_{=\theta}.
\end{equation*}
This finishes the proof.
\end{proof}

\begin{proof}[Proof of Theorem~\ref{thmNonReg}]
Let $p_{j}$ denote the probability that the $j$-th block is chosen for updating at the $k$-th iteration. Because ZO-BCD chooses blocks uniformly at random, $p_{j} = 1/J$ for all $j$. If \eqref{eq:Inexactness} holds for all $k$ then by \citep[Theorem~6.1]{tappenden2016inexact} if:
\begin{align*}
    & \eta^2+\frac{4\theta}{c_1}<1 \textnormal{ where } c_1=2JL_{\max}\mathcal{R}^2(x_0), \\
    & \frac{c_1}{2}(\eta+\sqrt{\eta^2+\frac{4\theta}{c_1\zeta}})<\varepsilon < f(x_0) - f^{*}, \\
    & u := \frac{c_1}{2}\left(\eta+\sqrt{\eta^2+\frac{4\theta}{c_1}}\right), \\
    & K := \frac{c_1}{\varepsilon-u}+\frac{c_1}{\varepsilon-\eta c_1}\log\left(\frac{\varepsilon-\frac{\theta c_1}{\varepsilon-\eta c_1}}{\varepsilon\zeta-\frac{\theta c_1}{\varepsilon-\eta c_1}}\right)+2,
\end{align*}
then $\mathbb{P}(f(x_K)-f^*\leq\varepsilon)\geq 1-\zeta$. Note that:
\begin{itemize}
    \item Our $\eta$ and $\theta$ are $\alpha$ and $\beta$ in their notation.
    \item In \citep{tappenden2016inexact} $c_1 = 2\mathcal{R}^{2}_{\ell p^{-1}}(x_0)$ where $\mathcal{R}^{2}_{\ell p^{-1}}(x_0)$is defined as in \eqref{eq:level_set_radius} but using a norm $\|\cdot\|_{\ell p^{-1}}$ instead of $\|\cdot\|_{2}$. These norms are related as:
    \begin{align*}
        \|x\|_{\ell p^{-1}}^{2} &= \sum_{j=1}^{J} \frac{L_{j}}{p_j}\|x^{(j)}\|_{2}^{2} \stackrel{(a)}{=} \sum_{j=1}^{J} JL_{\max}\|x^{(j)}\|_{2}^{2} \\
        & = JL_{\max}\|x\|_{2}^{2}
    \end{align*}
    where (a) follows as $p_{j} = 1/J$ for all $j$ and we are taking $L_{j} = L_{\max}$. Hence, $c_1 = 2JL_{\max}\mathcal{R}^{2}(x_0)$ as stated. 
    \item $K = \tilde{\mathcal{O}}\left(J/\varepsilon\right)$.
\end{itemize}
Replace $\eta$ and $\theta$ with the expressions given by Lemma~\ref{lemma:EtaTheta} to obtain the expressions given in the statement of Theorem~\ref{thmNonReg}. Because \eqref{eq:Inexactness} holds with high probability at each iteration, by the union bound it holds for all $K$ iterations with probability greater than
\begin{align*}
    & 1 - K\mathcal{O}\left(J\exp\left(-\frac{0.01s_{\mathrm{exact}}}{3J}\right)\right) \\
    = & 1 - \tilde{\mathcal{O}}\left(\frac{J^2}{\varepsilon}\exp\left(-\frac{0.01s_{\mathrm{exact}}}{3J}\right)  \right)
\end{align*}
for ZO-BCD-R and with probability greater than
\begin{small}
\begin{align} \label{eq:Dont_Multiply}
& 1 - 2JK\exp\left(\frac{-0.01s_{\mathrm{exact}}}{3J}\right) - \left(d/J\right)^{\log(d/J)\log^2(4.4s/J)} \cr
& = 1 - \tilde{\mathcal{O}}\left(\frac{J^2}{\varepsilon}\exp\left(-\frac{0.01s_{\mathrm{exact}}}{3J}\right)\right) - \left(d/J\right)^{\log(d/J)\log^2(4.4s/J)} 
\end{align}
\end{small}
Note the third term in \eqref{eq:Dont_Multiply} is universal, {\em i.e.} holds for all iterations, and hence is not multiplied by $K$. In both cases we use $K =\tilde{\mathcal{O}}(J/\varepsilon)$. As stated, if \eqref{eq:Inexactness} holds for all $k$ then $\mathbb{P}(f(x_K)-f^*\leq\varepsilon)\geq 1-\zeta$. Apply the union bound again to obtain the probabilities given in the statement of Theorem~\ref{thmNonReg}.
ZO-BCD-R uses $m = 1.1b_1(s/J)\log(d/J)$ queries per iteration, all made by the GradientEstimation subroutine,  and hence makes:
\begin{align*}
    mK &= \left(1.1b_1(s/J)\log(d/J)\right)K = \tilde{\mathcal{O}}\left(s/\varepsilon\right)
\end{align*}
queries in total, using $K = \tilde{\mathcal{O}}\left(J/\varepsilon\right)$. On the other hand, ZO-BCD-RC makes $m = 1.1b_3(s/J)\log^2(1.1s/J)\log^{2}(d/J)$ queries per iteration. A similar calculation reveals that ZO-BCD-RC also makes $mK = \tilde{\mathcal{O}}\left(s/\varepsilon\right)$ queries in total.

The computational cost of each iteration is dominated by solving the sparse recovery problem using CoSaMP. It is known \citep{needell2009cosamp} that CoSaMP requires $\mathcal{O}(n\mathcal{T})$ FLOPS, where $\mathcal{T}$ is the cost of a matrix-vector multiply by $Z$. For ZO-BCD-R $Z\in\mathbb{R}^{m\times (d/J)}$ is dense and unstructured hence:
\begin{equation*}
\mathcal{T} = \mathcal{O}\left(m\frac{d}{J}\right) = \mathcal{O}\left(\frac{s}{J}\log(d/J)\frac{d}{J}\right) = \tilde{\mathcal{O}}\left(\frac{sd}{J^2}\right).
\end{equation*}
As noted in \citep{needell2009cosamp}, $n$ may be taken to be $\mathcal{O}(1)$ (In all our experiments we take $n \leq 10$). For ZO-BCD-RC, as $Z$ is a partial ciculant matrix, we may exploit a fast matrix-vector multiplication via fast Fourier transform to reduce the complexity to $\mathcal{O}(d/J\log(d/J))=\tilde{\mathcal{O}}(d/J)$. 

Finally, we note that for both variants the memory complexity of ZO-BCD is dominated by the cost of storing $Z$. Again, as $Z$ is dense and unstructured in ZO-BCD-R there are no tricks that one can exploit here, so the storage cost is $m(d/J) = \tilde{\mathcal{O}}(sd/J^2)$. For ZO-BCD-RC, instead of storing the entire partial circulant matrix $Z$, one just needs to store the generating vector $z\in\mathbb{R}^{d/J}$ and the index set $\Omega$. Assuming we are allocating $32$ bits per integer, this requires:
$$
\frac{d}{J} + 32\cdot b_3\frac{s}{J}\log^{2}\left(\frac{s}{J}\right)\log^{2}\left(\frac{d}{J}\right) = \mathcal{O}\left(\frac{d}{J}\right).
$$
This finishes the proof.
\end{proof}

\section{Experimental setup details}
\label{sec:exp setup detail}
In this section, we provide the detailed experimental settings for the numerical results provided in Section~\ref{sec:experiments}. 

\subsection{Settings for synthetic experiments}
For both synthetic examples, we use problem dimension $d=20,000$ and gradient sparsity $s=200$. Moreover, we use additive Gaussian noise with variance $\sigma=10^{-3}$ in the oracles. The sampling radius is chosen to be $10^{-2}$ for all tested algorithms. For ZO-BCD, we use $5$ blocks with uniform step size\footnote{We note that using a line search for each block would maximize the advantage of block coordinate descent algorithms such as ZO-BCD, but we did not do so here for fairness} $\alpha = 0.9$, and the per block sparsity is set to be $s_\textrm{block} = 1.05s/J=42$. Furthermore, the block gradient estimation step runs at most $n=10$ iterations of CoSaMP. For the other tested algorithms, we hand tune the parameters for their best performance, and same step sizes are used when applicable. Note that SPSA must use a very small step size ($\alpha=0.01$) in max-$s$-squared-sum problem, or it will diverge.

\paragraph{Re-shuffling the blocks.} Note that the max-$s$-squared-sum function does not satisfy the Lipschitz differentiability condition (\textit{i.e.} Assumption~\ref{assumption:Lipschitz_Diff}). Moreover the gradient support changes, making this an extremely difficult zeroth-order problem. To overcome the difficulty of discontinuous gradients, we apply an additional step that re-shuffles the blocks every $J$ iterations. This re-shuffling trick is not required for the problems that satisfies our assumptions; nevertheless, we observe very similar convergence behavior with slightly more queries when the re-shuffling step was applied on the problems that satisfy the aforementioned assumptions.

\subsection{Settings for sparse DWT attacks on images}
We allows a total query budget of $10,000$ for all tested algorithms in each image attack, i.e. an attack is considered a failure if it cannot change the label within $10,000$ queries. We use a 3-level `db45'  wavelet transform. All the results present in Section~\ref{sec:ImageAttack} use the half-point symmetric boundary extension, hence the wavelet domain has a dimension of $676,353$; slightly larger than the original pixel domain dimension. For the interested reader, a discussion and more results about other boundary extensions can be found in Appendix~\ref{app:more_experimental_result}. 
 
For all variations of ZO-BCD, we choose the block size to be $170$ with per block sparsity $s_\mathrm{block}=10$, thus $m=52$ queries are used per iteration. Sampling radius is set to be $10^{-2}$. The block gradient estimation step runs at most $n=30$ CoSaMP iterations, and step size $\alpha=10$.


\subsection{Settings for sparse CWT attacks on audio signals}
For both targeted and untargeted CWT attack, we use Morse wavelets with $111$ frequencies, which significantly enlarges the problem dimension from $16,000$ to $1,776,000$ per clip. In the attack, we choose the block size to be $295$ with per block sparsity $s_\mathrm{block}=9$, thus $m=52$ queries are used per iteration. The sampling radius is $\delta = 10^{-3}$. The block gradient estimation step runs at most $n=30$ CoSaMP iterations. In targeted attacks, the step size is $\alpha=0.05$ , and Table~\ref{tab:untargeted_audio} specifies the step size used in untargeted attacks.

In Table~\ref{tab:untargeted_audio}, we also include a result of voice domain attack for the comparison. The parameter settings are same with aforementioned untargeted CWT attack settings, but we have to reduce step size to $0.01$ for stability. Also, note that the problem domain is much smaller in the original voice domain, so the number of blocks is much less while we keep the same block size.



\section{More experimental results and discussion} \label{app:more_experimental_result}
\subsection{Sparse DWT attacks on images}
\paragraph{Periodic extension.} 
As mentioned, when we use boundary extensions other than periodic extension, the dimension of the wavelet coefficients will increase, depending on the size of wavelet filters and the level of the wavelet transform. More precisely, wavelets with larger support and/or deeper levels of DWT result in a larger increase in dimensionality. On the other hand, using periodic extension will not increase the dimension of wavelet domain. We provide test results for both boundary extensions in this section for the interested reader. 

\paragraph{Compressed attack.} As discussed in Section~\ref{sec:sparse_attack}, DWT is widely used in data compression, such as JPEG-2000 for image. In reality, the compressed data are often saved in the form of sparse (and larger) DWT coefficients after vanishing the smaller ones. While we have already tested an attack on the larger DWT coefficients only (see Section~\ref{sec:ImageAttack}), it is also of interest to test an attack after compression. That is, we zero out the smaller wavelet coefficients ($\mathrm{abs}\leq 0.05$) first, and then attack on only the remaining, larger coefficients.

We use the aforementioned parameter settings for these two new attacks. We present the results in Table~\ref{tab:attack_result_full}, and for completeness we also include the results already presented in Table~\ref{tab:attack_result}. One can see that ZO-BCD-R(compressed) has higher attack success rate and lower $\ell_2$ distortion, exceeding the prior state-of-the-art as presented in Table~\ref{tab:attack_result}. We caution that this is not exactly a fair comparison with prior works however, as the compression step modifies the image before the attack begins. 

\begin{table}[t]
\caption{Results of untargeted image adversarial attack using various algorithms. Attack success rate (ASR), average final $\ell_2$ distortion, and number of queries till first successful attack. ZO-BCD-R(periodic ext.) stands for ZO-BCD-R applying periodic extension for implementing the wavelet transform. ZO-BCD-R(compressed) stands for applying ZO-BCD-R to attack only large wavelet coefficients (abs $\geq0.05$) and vanishing the smaller values. The other methods are the same in Table~\ref{tab:attack_result}.} \label{tab:attack_result_full}
\vspace{0.1in}
\begin{center}
\begin{small}
\begin{tabular}{lccc}
\toprule
\textsc{Methods} & \textsc{ASR} & \textsc{$\ell_2$ dist}  & \textsc{Queries}\\
\midrule
ZO-SCD    & 78$\%$  & 57.5 & 2400\\
ZO-SGD  & 78$\%$ &  37.9 & 1590\\
ZO-AdaMM  & 81$\%$ &  28.2 & 1720\\
ZORO      & 90$\%$  &  21.1 &   2950 \\
ZO-BCD-R & 92$\%$  &  14.1 &   2131 \\
ZO-BCD-RC & 92$\%$  &  14.2 &  2090 \\
ZO-BCD-R(periodic ext.)  & 95$\%$ &  21.0 &  1677 \\
ZO-BCD-R(compressed) & $\textbf{96}\%$  &  $\textbf{13.1}$ &   $\textbf{1546}$ \\
ZO-BCD-R(large coeff.) & $\textbf{96}\%$&  13.7 &   1662 \\
\bottomrule
\end{tabular}
\end{small}
\end{center}
\vspace{-0.05 in}
\end{table}

\paragraph{Defense tests.} Finally, we also tested some simple mechanisms for defending against our attacks; specifically harmonic denoising. We test DWT with the famous Haar wavelets, DWT with db45 wavelets which is also used for attack, and the essential discrete cosine transform (DCT). The defense mechanism is to apply the transform to the attacked images and then denoise by zeroing out small wavelet coefficients before transforming back to the pixel domain. We only test the defense on images that were successfully attacked. Tables~\ref{tab:defense_result_regular} and \ref{tab:defense_result_largecoeff} show the results of defending against the ZO-BCD-R and ZO-BCD-R(large coeff.) attacks respectively. Interestingly, using the attack wavelet (\textit{i.e.} db45) in defence is not a good strategy. We obtain the best defense results by using a mismatched transform ({\em i.e.} DCT or Haar defense for a db45 attack) and a small thresholding value.

\begin{table}[t]
\caption{Defense of image adversarial wavelet attack by ZO-BCD-R. Defense recovery success rate under haar and db45 wavelet filters, and discrete cosine transform (DCT) filter. Thresholding values of 0.05, 0.10, 0.15, and 0.20 were considered.} \label{tab:defense_result_regular}
\vspace{0.1in}
\begin{center}
\begin{small}
\begin{tabular}{lrrrr}
\toprule
\textsc{Defence Methods} & \textsc{0.05} & \textsc{0.10} & \textsc{0.15}  & \textsc{0.20}\\
\midrule
Haar    & 74$\%$ &  $75\%$ & $76\%$ & $75\%$\\
db45  & 74$\%$&  $72\%$ & $71\%$ & $63\%$\\
DCT  & 72$\%$&  $\textbf{79}\%$ & $75\%$ & $67\%$\\
\bottomrule
\end{tabular}
\end{small}
\end{center}
\vspace{-0.05 in}
\end{table}

\begin{table}[t]
\caption{Defense of image adversarial wavelet attack by ZO-BCD-R (large coeff.). Defense recovery success rate under Haar and db45 wavelet filters, and discrete cosine transform (DCT) filter. Thresholding values 0.05, 0.10, 0.15, and 0.20 were considered.} \label{tab:defense_result_largecoeff}
\vspace{0.1in}
\begin{center}
\begin{small}
\begin{tabular}{lrrrr}
\toprule
\textsc{Defence Methods} & \textsc{0.05} & \textsc{0.10} & \textsc{0.15}  & \textsc{0.20}\\
\midrule
Haar    & 75$\%$ &  $72\%$ & $\textbf{78}\%$ & $76\%$\\
db45  & 71$\%$&  $70\%$ & $69\%$ & $63\%$\\
DCT  & 62$\%$&  $74\%$ & $68\%$ & $58\%$\\
\bottomrule
\end{tabular}
\end{small}
\end{center}
\vspace{-0.05 in}
\end{table}

\paragraph{More adversarial examples.} In Figure~\ref{fig:AttackedImages}, we presented some adversarial images generated by the ZO-BCD-R and ZO-BCD-R(large coeff.) attacks. For the interested reader, we present more visual examples in Figure~\ref{fig:MoreAttackedImages}, and include the adversarial attack results generated by all versions of ZO-BCD.

\begin{figure*}[t]
\centering 
\subfloat[\scriptsize ZO-BCD-R: ``frying pan'' $\rightarrow$ ``strainer'']{\includegraphics[width=.19\linewidth]{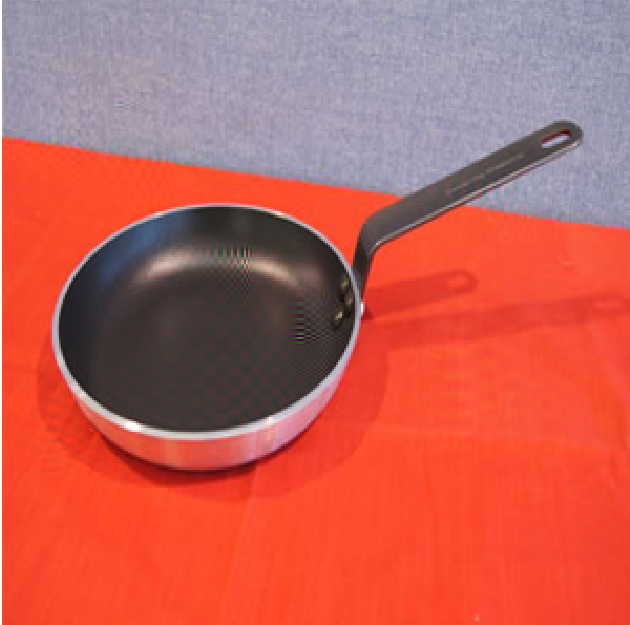}}
\hfill
\subfloat[\scriptsize ZO-BCD-RC: ``frying pan'' $\rightarrow$ ``strainer'']{\includegraphics[width=.19\linewidth]{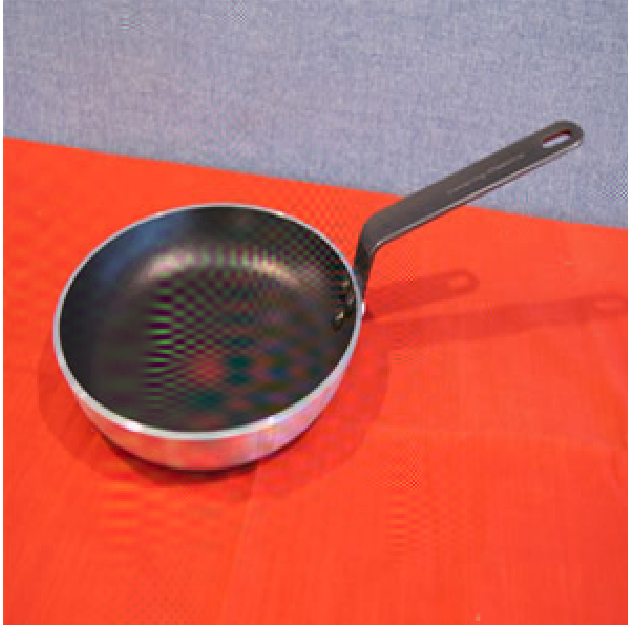}}
\hfill
\subfloat[\scriptsize ZO-BCD-R (periodic ext.): ``frying pan'' $\rightarrow$ ``strainer'']{\includegraphics[width=.19\linewidth]{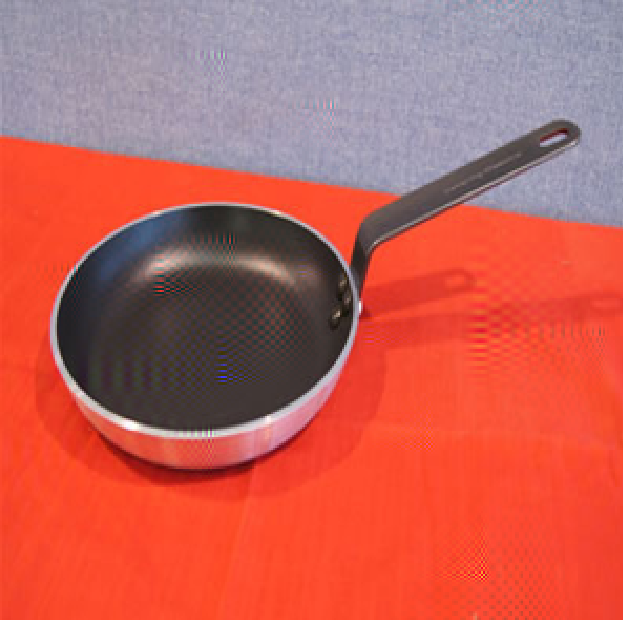}}
\hfill
\subfloat[\scriptsize ZO-BCD-R (compressed): ``frying pan'' $\rightarrow$ ``strainer'']{\includegraphics[width=.19\linewidth]{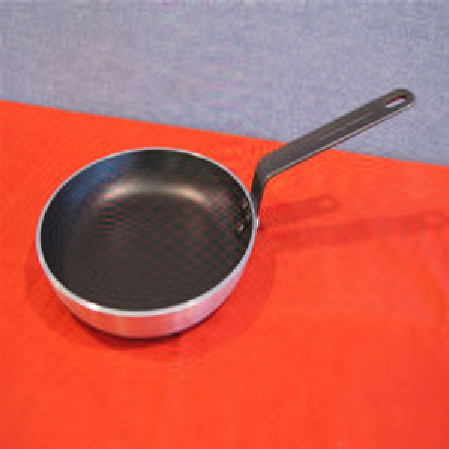}}
\hfill
\subfloat[\scriptsize ZO-BCD-R (large coeff.): ``frying pan'' $\rightarrow$ ``strainer'']{\includegraphics[width=.19\linewidth]{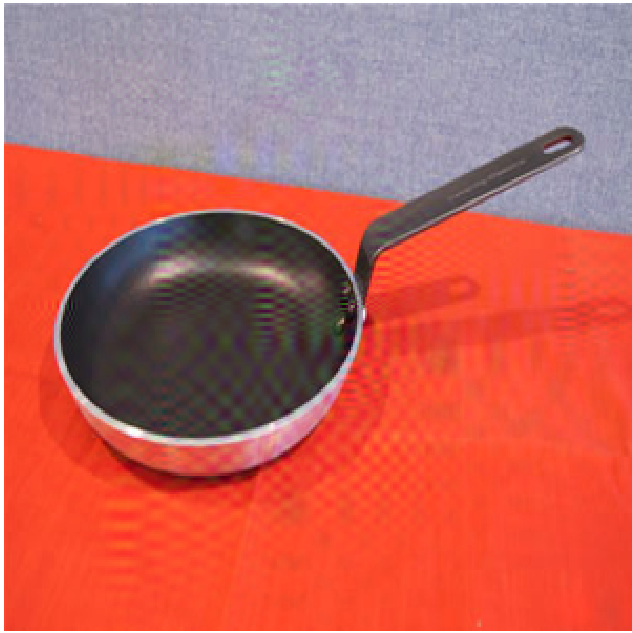}}
\\
\subfloat[\scriptsize ZO-BCD-R: ``strawberry'' $\rightarrow$ ``pomegranate'']{\includegraphics[width=.19\linewidth]{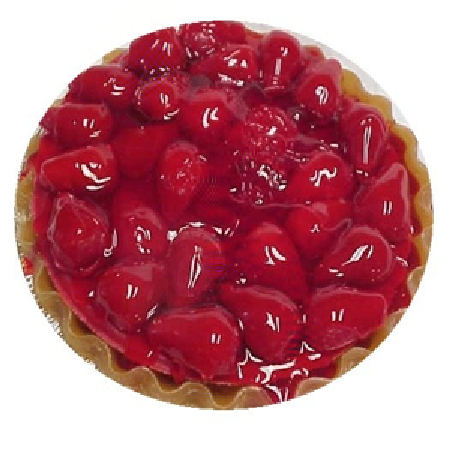}}
\hfill
\subfloat[\scriptsize ZO-BCD-RC: ``strawberry'' $\rightarrow$ ``pomegranate'']{\includegraphics[width=.19\linewidth]{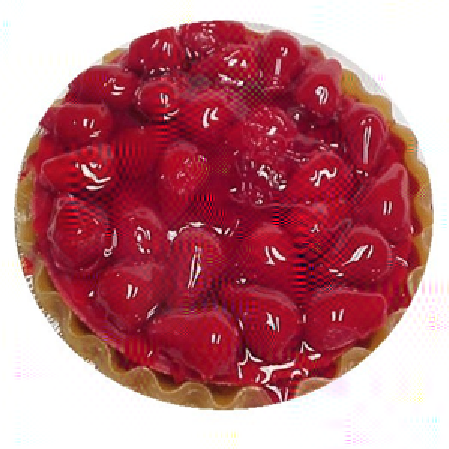}}
\hfill
\subfloat[\scriptsize ZO-BCD-R (periodic ext.): ``strawberry'' $\rightarrow$ ``pomegranate'']{\includegraphics[width=.19\linewidth]{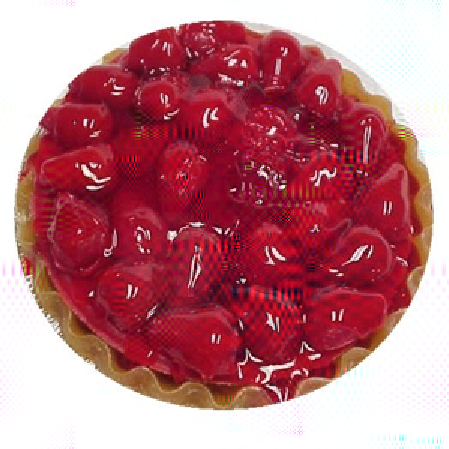}}
\hfill
\subfloat[\scriptsize ZO-BCD-R (compressed): ``strawberry'' $\rightarrow$ ``pomegranate'']{\includegraphics[width=.19\linewidth]{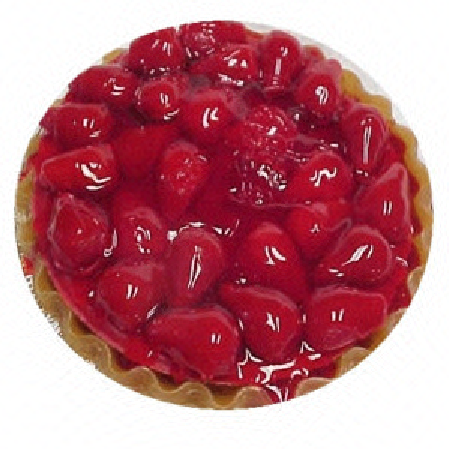}}
\hfill
\subfloat[\scriptsize ZO-BCD-R (large coeff.): ``strawberry'' $\rightarrow$ ``pomegranate'']{\includegraphics[width=.19\linewidth]{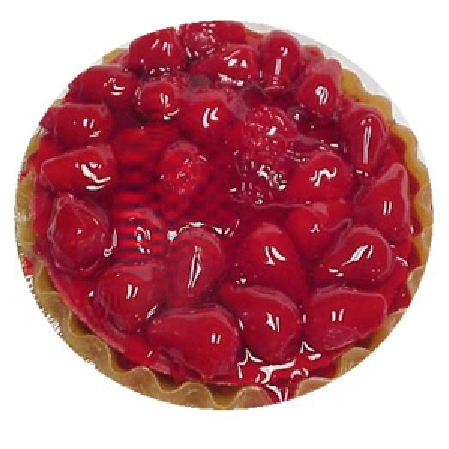}}
\caption{More examples of wavelet attacked images by ZO-BCD-R, ZO-BCD-RC, ZO-BCD-R(periodic ext.), ZO-BCD-R(compressed), and ZO-BCD-R(large coeff.), with true labels and mis-classified labels.}
\label{fig:MoreAttackedImages}
\end{figure*}

\subsection{Sparse CWT attacks on audio signals}
\label{appendix:Audio}

Adversarial attacks on speech recognition is a more nebulous concept than that of adversarial attacks on image classifiers, with researchers considering a wide variety of threat models. In \citep{cisse2017houdini}, an attack on the speech-to-text engine {\tt DeepSpeech} \citep{hannun2014deep} is successfully conducted, although the proposed algorithm, Houdini, is only able to force minor mis-transcriptions (``A great saint, saint Francis Xavier'' becomes ``a green thanked saint fredstus savia''). In \citep{carlini2018audio}, this problem is revisited, and they are able to achieve 100\% success in targeted attacks, with any initial and target transcription from the Mozilla Common Voices dataset (for example, ``it was the best of times, it was the worst of times'' becomes ``it is a truth universally acknowledged that a single''). We emphasize that both of these attacks are {\em whitebox}, meaning that they require full knowledge of the victim model. We also note that speech-to-text transcription is not a classification problem, thus the classic Carlini-Wagner loss function so frequently used in generating adversarial examples for image classifiers cannot be straightforwardly applied. The difficulty of designing an appropriate attack loss function is discussed at length in \citep{carlini2018audio}.

A line of research more related to the current work is that of attacking keyword classifiers. Here, the victim model is a classifier; designed to take as input a short audio clip and to output probabilities of this clip containing one of a small, predetermined list of keywords (``stop'', ``left'' and so on). Most such works consider the {\tt SpeechCommands} dataset \citep{warden2018speech}. To the best of the author's knowledge, the first paper to consider targeted attacks on a keyword classification model was \citep{alzantot2018did}, and they do so in a black-box setting. They achieve an $87\%$ attack success rate (ASR) using a genetic algorithm, whose query complexity is unclear. They do not report the relative loudness of their attacks; instead they report the results of a human study in which they asked volunteers to listen to and label the attacked audio clips. They report that for $89\%$ of the successfully attacked clips human volunteers still assigned the clips the correct ({\em i.e.} source) label, indicating that these clips were not corrupted beyond comprehension. Their attacks are per-clip, {\em i.e.} not universal. 

In \citep{vadillo2019universal}, universal and untargeted attacks on a {\tt SpeechCommands} classifier are considered. Specifically, they seek to construct a distortion $\delta$ such that for {\em any } clip $x$ from a specified source class ({\em e.g.} ``left''), the attacked clip $x + \delta$ is misclassified to a source class ({\em e.g.} ``yes'') by the model. They consider several variations on this theme; allowing for multiple source classes. The results we recorded in Section~\ref{sec:AudioAttack} (ASR of $70.3\%$ at a remarkably low mean relative loudness of $-41.63$ dB) were the best reported in the paper, and were for the single-source-class setting. This attack was in the white-box setting. 

Finally, we mention two recent works which consider very interesting, but different threat models. \citep{li2020advpulse} considers the situation where a malicious attacker wishes to craft a short (say $0.5$ second long) that can be added {\em to any part} of a clean audio clip to force a misclassification. Their attacks are targeted and universal, and conducted in the white-box setting. They do not report the relative loudness of their attacks. In \citep{xie2020enabling}, a generative model is trained that takes as input a benign audio clip $x$, and returns an attacked clip $x + \delta$. The primary advantage of this approach is that attacks can be constructed on the fly. In the targeted, per-clip white-box setting they achieve the success rate of $93.6\%$ advertised in Section~\ref{sec:AudioAttack}, at an approximate relative loudness of $-30$ dB. They also consider universal attacks, and a transfer attack whereby the generative model is trained on a surrogate classification model. 

In all the aforementioned keyword attacks, the victim model is some variant of the model proposed in \citep{sainath2015convolutional}. Specifically, the audio input is first transformed into a 2D spectrogram using Mel frequency coefficients, bark coefficients or similar. Then, a 3- to 5- layer convolutional neural network is applied.

\end{document}